\newcommand{\ints}{\mathbb{Z}}
\newcommand{\inv}{^{-1}}
\renewcommand{\tilde}{\widetilde}
\DeclareMathOperator{\Hom}{Hom}
\DeclareMathOperator{\C}{\mathbb{C}}
\DeclareMathOperator{\level}{level}
\DeclareMathOperator{\glevel}{geolevel}
\DeclareMathOperator{\po}{po}
\DeclareMathOperator{\spec}{spec}
\DeclareMathOperator{\Mod}{Mod}
\DeclareMathOperator{\Rep}{Rep}
\newcommand{\normcl}[1]{\langle \langle #1 \rangle \rangle}
\newtheorem*{corollary*}{Corollary}
\newtheorem*{lemma*}{Lemma}
\newtheorem*{theorem*}{Theorem}
\numberwithin{equation}{section}
\newtheorem{theorem}{Theorem}[section]
\newtheorem{definition}[theorem]{Definition}
\newtheorem{corollary}[theorem]{Corollary}
\newtheorem{lemma}[theorem]{Lemma}
\newtheorem{proposition}[theorem]{Proposition}
\newtheorem{observation}[theorem]{Observation}
\newenvironment{customthm}[1]
  {\innercustomthm}
  {\endinnercustomthm}
\begin{document}

\title[Congruence subgroups and $B_3$ representations ]{Congruence subgroups from representations of the three-strand  braid group}

\author{Joseph Ricci}
\email{ricci@math.ucsb.edu}
\address{Dept. of Mathematics\\
    University of California\\
    Santa Barbara, CA 93106-6105\\
    U.S.A.}

\author{Zhenghan Wang}
\email{zhenghwa@microsoft.com}
\address{Microsoft Station Q and Dept. of Mathematics\\
    University of California\\
    Santa Barbara, CA 93106-6105\\
    U.S.A.}

\thanks{The second author is partially supported by NSF grants DMS-1410144 and DMS-1411212.}

\date{\today}

\begin{abstract} Ng and Schauenburg proved that the kernel of a $(2+1)$-dimensional topological quantum field theory representation of $\mathrm{SL}(2, \ints)$ is a congruence subgroup. Motivated by their result, we explore when the kernel of an irreducible representation of the braid group $B_3$ with finite image enjoys a congruence subgroup property. In particular, we show that in dimensions two and three, when the projective order of the image of the braid generator $\sigma_1$ is between 2 and 5 the kernel projects onto a congruence subgroup of $\mathrm{PSL}(2,\ints)$ and compute its level. However, we prove for three dimensional representations, the projective order is not enough to decide the congruence property. For each integer of the form $2\ell \geq 6$ with $\ell$ odd, we construct a pair of non-congruence subgroups associated with three-dimensional representations having finite image and $\sigma_1$ mapping to a matrix with projective order $2\ell$. Our technique uses classification results of low dimensional braid group representations, and the Fricke-Wohlfarht theorem in number theory.

\end{abstract}

\maketitle

\section{Introduction}

The double cover $\mathrm{SL}(2,\ints)$ of the modular group $\mathrm{PSL}(2,\ints)$ naturally occurs in quantum topology as the mapping class group of the torus.  Let $\Sigma_{g,n}$ be the orientable genus $g$ surface with $n$ punctures and denote by $\Mod(\Sigma_{g,n})$ its mapping class group. A (2+1)-dimensional topological quantum field theory (TQFT) affords a projective representation of $\Mod(\Sigma_{g,n})$ which we refer to as a quantum representation.  An amazing theorem of
Ng and Schauenburg \cite{NgSch} says that the kernel of the quantum representations of  $\mathrm{SL}(2,\ints)$ is always a congruence subgroup.  The modular group is also disguised as the three-strand braid group $B_3$ through the central extension: $1\rightarrow \mathbb{Z}=\langle(\sigma_1\sigma_2)^3 \rangle \rightarrow B_3\rightarrow \mathrm{PSL}(2,\ints) \rightarrow 1$.  Each simple object of the modular tensor category $\mathcal{C}$ associated to a $(2+1)$-TQFT gives rise to a representation of $B_3$.  Are there versions of the Ng-Schauenburg congruence kernel theorem for those braid group representations?  We initiate a systematical investigation of this problem and find that a native generalization does not hold.

To pass from a representation of $B_3$ to the modular group $\mathrm{PSL}(2,\ints)$, we consider only irreducible representations $\rho_x : B_3\rightarrow \mathrm{GL}(d,\C)$ associated to a simple object $x$ of a modular tensor category $\mathcal{C}$.  Then the generator $(\sigma_1\sigma_2)^3$ of the center of $B_3$ is a scalar of finite order.  By rescaling $\rho_x$ with a root of unity $\xi$, we obtain a representation of the modular group $\rho_{x,\xi} : \mathrm{PSL}(2,\ints) \rightarrow \mathrm{GL}(d,\C)$.
By the property F conjecture, the representations $\rho_{x,\xi}$ should have finite images if the squared quantum dimension $d_x^2$ of $x$ is an integer.  For the Ising anyon $\sigma$, the kernel is indeed a congruence subgroup, but the kernel for the anyon denoted as $G$ in $D(S_3)$ is not \cite{Cui}.  Therefore, when a property F anyon has a congruence subgroup property is more subtle.  In this paper we systematically explore the low dimensional irreducible representations of $B_3$ with finite images, and determine when the kernel is a congruence subgroup.

Congruence subgroups of $\mathrm{SL}(2,\ints)$ are well-studied as they are easy examples of finite index subgroups of $\mathrm{SL}(2,\ints)$ and because of their role in the theory of modular forms and functions. Their relative scarcity in the collection of all finite index subgroups of $\mathrm{SL}(2,\ints)$ makes them of interest. Indeed, if we let $N_c(n)$ (resp. $N(n)$) denote the number of congruence subgroups (resp. subgroups) of $\mathrm{SL}(2,\ints)$ with index $n$ then $N_c(n) / N(n) \to 0$ as $n \to \infty$ {\cite{Sto}}. Contrast this with the result of Bass, Lazard, and Serre \cite{BLS} and separately Mennicke \cite{Men} stating that for $d$ greater than two any finite index subgroup of $\mathrm{SL}(d,\ints)$ is a congruence subgroup.

Another motivation of this research is to study the vector-valued modular forms (VVMF) associated to congruence subgroups (see \cite{Gannon} and the references therein).  VVMFs provide deep insight for the study of TQFTs and conformal field theories (CFTs).  Since the general VVMF theory applies also to non-congruence subgroups, TQFT representations of $B_3$ provide interesting test ground of the theory and conversely, VVMF could provide deep insight into the study of the TQFT representations of $B_3$ even in the non-congruence case. The matrices contained in the image of a $B_3$ representation can also be used as quantum gates for topological quantum computations, the congruence property of $B_3$ representations might even find application to quantum information processing \cite{Wang}.
 
\subsection{Main results} Our two main theorems address the question raised above. For a square matrix $A$ with eigenvalues $\lambda_1, \ldots, \lambda_d$ denote by $\po(A)$ the least positive integer $t$ so that $\lambda_1^t = \cdots = \lambda_d^t$. This is the projective order of $A$. Rowell and Tuba determined in {\cite{RT}} a criteria for deciding (in almost all cases) when the image of an irreducible representation $\rho$ of dimension five or less has finite image. When $\rho$ is two or three-dimensional, this often depends only on the projective order of $\rho(\sigma_1)$ and is therefore determined by the eigenvalues of this matrix. Upon scaling by a character, we can assume such a representation factors through the quotient map $\pi: B_3 \to \mathrm{PSL}(2,\ints)$. Motivated by the result of Ng and Schauenburg, we ask when the kernel of the induced representation of $\mathrm{PSL}(2,\ints)$ is a congruence subgroup.

\begin{customthm}{A} Let $d=2$ or $3$ and suppose $\rho: B_3 \to \mathrm{GL}(d,\C)$ is a $d$-dimensional representation with finite image that factors through $\pi$. If $2 \leq \po(\rho(\sigma_1)) \leq 5$ then $\pi(\ker \rho)$ is a congruence subgroup with level equal to the order of $\rho(\sigma_1)$.
\end{customthm}

We immediately are able to conclude:

\begin{corollary*} Every two-dimensional irreducible quantum representation of $B_3$ with finite image can be scaled so that its kernel projects onto a congruence subgroup of $B_3/Z(B_3)= \mathrm{PSL}(2,\ints)$. Every three-dimensional irreducible quantum representation of $B_3$ with finite image and $2 \leq \po(\rho(\sigma_1)) \leq 5$ can be scaled so that its kernel projects onto a congruence subgroup of $\mathrm{PSL}(2,\ints)$.
\end{corollary*}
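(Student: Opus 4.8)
The plan is to reduce both assertions to Theorem~A. The only work beyond citing Theorem~A is (i) turning a quantum representation into one that factors through $\pi$ by an overall scaling, and (ii) in the two-dimensional case, observing that the projective-order hypothesis of Theorem~A is automatically satisfied. Step (ii) is the only nontrivial point; everything else is bookkeeping.

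First I would record the scaling step. Let $\rho$ be the irreducible representation attached to a simple object, with finite image. Since $(\sigma_1\sigma_2)^3$ generates $Z(B_3)=\ker\pi$ and is central, irreducibility and Schur's lemma force $\rho\big((\sigma_1\sigma_2)^3\big)=\zeta I$ for some scalar $\zeta$, and finiteness of the image makes $\zeta$ a root of unity. Choosing a root of unity $\xi$ with $\xi^6=\zeta^{-1}$ and setting $\rho_\xi(\sigma_i)=\xi\,\rho(\sigma_i)$ gives $\rho_\xi\big((\sigma_1\sigma_2)^3\big)=\xi^6\zeta I=I$, so $\rho_\xi$ is trivial on $Z(B_3)$ and descends through $\pi\colon B_3\to\mathrm{PSL}(2,\ints)$. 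Multiplying by the scalar $\xi$ alters neither the finiteness of the image nor the eigenvalue ratios of $\rho(\sigma_1)$, so it preserves $\po(\rho(\sigma_1))$, and $\pi(\ker\rho_\xi)$ is precisely the kernel of the induced representation of $\mathrm{PSL}(2,\ints)$. For the three-dimensional statement this is all that is required: the hypothesis $2\le\po(\rho(\sigma_1))\le 5$ transfers verbatim to $\rho_\xi$, so Theorem~A applies and $\pi(\ker\rho_\xi)$ is a congruence subgroup.

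For the two-dimensional statement I would then show the projective-order bound is forced. After scaling, the projective image $\overline{\rho_\xi}(B_3)$ is a finite subgroup of $\mathrm{PGL}(2,\C)$, hence conjugate to a cyclic, dihedral, $A_4$, $S_4$, or $A_5$ group. The generators $\sigma_1,\sigma_2$ are conjugate in $B_3$, so $\overline{\rho_\xi}(\sigma_1)$ and $\overline{\rho_\xi}(\sigma_2)$ are conjugate generators of this group; if either were the identity the representation would be reducible, so both are nontrivial and $\po(\rho(\sigma_1))\ge 2$, which also excludes the cyclic case. In the dihedral case two conjugate rotations generate a cyclic (hence reducible) image, so $\overline{\rho_\xi}(\sigma_1)$ must be a reflection and $\po(\rho(\sigma_1))=2$. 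In the remaining cases $\po(\rho(\sigma_1))$ is the order of a nontrivial element of $A_4$, $S_4$, or $A_5$, hence lies in $\{2,3,4,5\}$. Thus $2\le\po(\rho(\sigma_1))\le 5$ in every case, and Theorem~A again yields the congruence conclusion. Equivalently, one may invoke the Rowell--Tuba finiteness criterion \cite{RT}, which identifies exactly these projective orders.

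I expect the genuine obstacle to be step (ii), the automatic projective-order bound in dimension two: one must verify that irreducibility really does eliminate every finite-subgroup configuration with $\po(\rho(\sigma_1))\ge 6$, in particular ruling out the dihedral image generated by rotations. The scaling argument and the reduction to Theorem~A should otherwise be routine.
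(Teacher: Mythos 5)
Your proof is correct, and the overall reduction is the one the paper intends: the corollary is stated as an immediate consequence of Theorem A, whose Section~3 form already contains your step (i) verbatim (scale to finite image, then by a sixth root of the central character so that $(\sigma_1\sigma_2)^3\mapsto I$, noting that scaling changes neither finiteness nor projective order). Where you genuinely diverge is step (ii): the paper disposes of the two-dimensional projective-order bound by citing Rowell--Tuba, namely Theorem \ref{RT}(d), which states precisely that a two-dimensional irreducible with finite image has $2\le\po(\rho(\sigma_1))\le 5$, whereas you re-derive this from the classification of finite subgroups of $\mathrm{PGL}(2,\C)$ (cyclic, dihedral, $A_4$, $S_4$, $A_5$), using that $\sigma_1$ and $\sigma_2$ are conjugate and generate $B_3$ to rule out abelian projective images and to force a reflection in the dihedral case. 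Your argument is sound: two conjugate rotations would generate a cyclic, hence abelian, projective image, contradicting irreducibility, and the element orders in the remaining groups lie in $\{2,3,4,5\}$. What your route buys is a self-contained, geometrically transparent proof of exactly the input the paper outsources to \cite{RT}; what the paper's route buys is brevity and uniformity, since the same Rowell--Tuba theorem is needed anyway for the three-dimensional hypotheses and for Proposition \ref{maingamma2}. Two trifles: with the paper's definition of $\po$ (minimum over $t>1$), the bound $\po\ge 2$ is automatic, so your care on that point is vacuous; and for the three-dimensional half there is indeed nothing to prove beyond the scaling, just as you say, since the $\po$ hypothesis is assumed rather than derived there.
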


The most important tool we will use for the proof of the above is the Fricke-Wohlfarht theorem. This relates the (ordinary) level and the geometric level of a congruence subgroup of $\mathrm{PSL}(2,\ints)$. In particular, if we let $N$ be the order of $\rho(\sigma_1)$ then as a consequence of Fricke-Wohlfarht, we find that $\pi(\ker \rho)$ is a congruence subgroup if and only if the principal congruence subgroup of level $N$ is a subgroup of $\pi(\ker\rho)$. This is Corollary \ref{maincorollary}. To finally obtain our main result we will show that an arbitrary two-dimensional representation factors through $\mathrm{PSL}(2,N)$ with $N$ as above. This can be explicitly checked using the presentation given in \cite{Hsu}. Applying the result of Rowell and Tuba, we find that there are only a small number of cases to check. Moreover, the above property is determined by the eigenvalues of $\rho(\sigma_1)$ which is both pleasing and expected. Our second main result points out that for three dimensional representations, once the projective order of $\rho(\sigma_1)$ is greater than five, the projective order is not enough to determine the congruence properties of the induced kernel.

\begin{customthm}{B} For any positive integer of the form $2\ell >2 $ with $\ell$ odd, there are two representations $\rho_{\ell,\pm}: B_3 \to \mathrm{GL}(3,\C)$ with finite image that factors through $\pi$ for which $\po(\rho_{\ell,\pm}(\sigma_1))=2\ell$ and each $\pi(\ker\rho_{\ell,\pm})$ is not a congruence subgroup.
\end{customthm}

\section{Background}

In this section we will develop the some of the basic material necessary for our main theorem. 

\subsection{Congruence subgroups}
The finite index subgroups of an infinite group hold much information about the group. The congruence subgroups of $\mathrm{SL}(d,\ints)$ and its projectivization $\mathrm{PSL}(d,\ints)$ are one class such class subgroups. In some sense, they are the easy examples of such subgroups. Let us give the formal definition. 

\begin{definition} Let $d$ be greater than one. For each positive integer $N$ there is a short exact sequence 
\begin{equation*}
1 \to \ker {\varphi_N} \to \mathrm{PSL}(d,\ints) \stackrel{\varphi_N}{\to} \mathrm{PSL}(d,\ints /N \ints) \to 1
\end{equation*}
where $\varphi_N$ is the homomorphism given by reducing mod $N$.  The kernel of this homorphism is a finite index subgroup of $\mathrm{PSL}(d,\ints)$, called the {\bf{principal congruence subgroup of $PSL(d,\ints)$ of level $N$}}. Any finite index subgroup subgroup $G$ of $\mathrm{PSL}(d,\ints)$ containing $\ker\varphi_N$ for some $N>1$ is called a {\bf{congruence subgroup}} and otherwise we say $G$ is {\bf{non-congruence}}.
\end{definition}

The above definition brings one to ask whether every finite index subgroup of $\mathrm{PSL}(d,\ints)$ is a congruence subgroup. This was answered in \cite{BLS,Men}. In particular, the question of congruence is interesting only for subgroups of $\mathrm{PSL}(2,\ints)$.

\begin{theorem} Every finite index subgroup of $\mathrm{PSL}(d,\ints)$ is a congruence subgroup if and only if $d$ is greater than two. 
\end{theorem}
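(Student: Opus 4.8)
The statement is a biconditional, so the plan is to establish the two implications separately and, throughout, to reduce questions about $\mathrm{PSL}(d,\ints)$ to the linear group $\mathrm{SL}(d,\ints)$. This reduction is harmless: the center of $\mathrm{SL}(d,\ints)$ is finite (it consists of scalar matrices $\lambda I$ with $\lambda^d=1$), the projection $p\colon \mathrm{SL}(d,\ints)\to \mathrm{PSL}(d,\ints)$ is surjective, and taking $p^{-1}$ of a finite-index (resp. congruence) subgroup of $\mathrm{PSL}(d,\ints)$ yields a finite-index (resp. congruence) subgroup of $\mathrm{SL}(d,\ints)$ containing the center, with the correspondence reversible. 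Hence it suffices to prove the same dichotomy for $\mathrm{SL}(d,\ints)$.

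For the forward direction ($d>2$ implies congruence), I would first reduce to finite-index \emph{normal} subgroups: any finite-index subgroup $H$ contains its normal core $\bigcap_{g} gHg^{-1}$, which is again of finite index and normal, so if the core contains a principal congruence subgroup then so does $H$. Next I would use that $\mathrm{SL}(d,\ints)$ is generated by the elementary matrices $E_{ij}(t)$ (Gaussian elimination over the Euclidean ring $\ints$). Given a finite-index normal subgroup $N$, each one-parameter subgroup $\{E_{ij}(t):t\in\ints\}\cong\ints$ has finite image in the finite quotient $\mathrm{SL}(d,\ints)/N$, so $E_{ij}(q)\in N$ for some $q$. The crux is then the Steinberg commutator relation $[E_{ij}(s),E_{jk}(t)]=E_{ik}(st)$, available precisely because $d\geq 3$ supplies a third index $k$; iterating it should let one promote the individual powers $E_{ij}(q)$ to a uniform congruence datum and show that $N$ contains $\Gamma(N_0)$ for a suitable level $N_0$.

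The main obstacle is exactly this last step: controlling, in a bounded number of commutator and conjugation operations, the subgroup generated by the $E_{ij}(q)$ and identifying it with a subgroup containing a principal congruence subgroup. This is the technical heart of the Bass--Lazard--Serre and Mennicke theorems, and is where the hypothesis $d\geq 3$ is genuinely used; for $d=2$ the relation above degenerates (there is no mediating index), and indeed the conclusion fails. Since the full argument is long, at this point I would simply invoke \cite{BLS,Men}.

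For the reverse direction ($d=2$ admits non-congruence subgroups) I would exhibit the failure directly. Using $\mathrm{PSL}(2,\ints)\cong C_2 * C_3$, the group surjects onto the alternating groups $A_n$ for all large $n$ (these are generated by an element of order $2$ and one of order $3$), whereas every congruence quotient is a quotient of some $\mathrm{PSL}(2,\ints/N)\cong\prod_p \mathrm{PSL}(2,\ints/p^{e_p})$, so its only simple nonabelian quotients are the groups $\mathrm{PSL}(2,p)$ with $p\mid N$. Choosing $n$ large enough that $A_n$ is simple and not isomorphic to any $\mathrm{PSL}(2,p)$, the kernel of a surjection $\mathrm{PSL}(2,\ints)\twoheadrightarrow A_n$ is a finite-index non-congruence subgroup. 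Alternatively, one may quote the enumeration already cited in the introduction: the ratio $N_c(n)/N(n)\to 0$ \cite{Sto} forces non-congruence subgroups to exist.
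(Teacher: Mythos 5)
Your proposal is correct in substance, but note that the paper does not actually prove this theorem: it states it and cites \cite{BLS,Men} for both directions, so your write-up is strictly more detailed than the source. For $d \geq 3$ you reproduce the standard skeleton of the Bass--Lazard--Serre/Mennicke argument (passage to the normal core, generation of $\mathrm{SL}(d,\ints)$ by elementary matrices, the Steinberg relation $[E_{ij}(s),E_{jk}(t)] = E_{ik}(st)$ which requires a third index) and honestly defer the genuinely hard quantitative step to the same references the paper cites, so on that half you match the paper's level of rigor while explaining where $d\geq 3$ enters. For $d=2$ you supply a complete argument the paper omits entirely: $\mathrm{PSL}(2,\ints) \cong (\ints/2\ints)\ast(\ints/3\ints)$ surjects onto $A_n$ for all $n \geq 9$ (classical $(2,3)$-generation), while any congruence quotient is a quotient of $\mathrm{PSL}(2,\ints/N\ints)$, whose nonabelian composition factors are among the groups $\mathrm{PSL}(2,p)$ with $p \mid N$; since $A_n$ for $n \geq 9$ contains a non-cyclic Sylow $3$-subgroup whereas $\mathrm{PSL}(2,p)$ has cyclic Sylow $\ell$-subgroups for $\ell \neq p$ (and $\mathrm{PSL}(2,3)$ is solvable), no such isomorphism exists and the kernel of the surjection is a finite-index non-congruence subgroup. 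This is a legitimately different and more informative route than the paper's bare citation, and your fallback via the counting result of \cite{Sto} is also valid.

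One caveat worth flagging: your opening claim that the $\mathrm{SL} \leftrightarrow \mathrm{PSL}$ correspondence is ``reversible'' is not automatic under the paper's definition, which takes the principal congruence subgroup of $\mathrm{PSL}(d,\ints)$ to be the \emph{kernel} of the reduction map $\varphi_N$. That kernel consists of classes of matrices congruent to $\lambda I \bmod N$ with $\lambda^d \equiv 1 \bmod N$, and can strictly contain the image of $\Gamma(N)$ when $\lambda \not\equiv \pm 1$ is possible (e.g.\ $\lambda = 3$, $N=8$ for $d=2$); consequently ``$p^{-1}(G) \supseteq \Gamma(N)$'' does not formally yield ``$G \supseteq \ker\varphi_M$ for some $M$.'' This does not damage your proof as written --- for $d\geq 3$ the content is outsourced to the cited theorem anyway, and your $d=2$ argument works directly inside $\mathrm{PSL}(2,\ints)$ and rules out containment of either variant of the principal congruence subgroup, since in both cases the resulting quotient has composition factors only among the $\mathrm{PSL}(2,p)$ and abelian groups --- but the reduction should not be advertised as harmless without this check.
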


\subsection{The modular group}

For the rest of this paper, let $\Gamma$ denote $\mathrm{PSL}(2,\ints)$. It is well-known that $\Gamma$ is isomorphic to the free product $(\ints/2\ints) \ast (\ints/ 3 \ints)$ and a presentation of $\Gamma$ is given with generators $T$ and $U$ subject to 
\begin{equation*}
(TU^{-1}T)^2 = (U^{-1}T)^3 = 1 
\end{equation*}
where we identify $T$ and $U$ respectively with the images of the matrices 
\begin{equation*}
\begin{pmatrix} 1 & 1 \\ 0 & 1 \end{pmatrix} \ \text{and} \  \begin{pmatrix} 1 & 0 \\ 1 & 1 \end{pmatrix}
\end{equation*}
in $\Gamma$. Another presentation often encountered in the literature is given with generators $S$ and $T$ with the relations
\begin{equation*}
S^2 = (ST)^3 = 1
\end{equation*}
and this relates to the first presentation via $S = TU^{\inv}T$. Both sets of generators will be used throughout this paper depending on our preferences. We can treat an element of $\Gamma$ as a matrix $A$ with the understanding that $A\sim-A$ in $\Gamma$. Let us also make special notation for the principal congruence subgroups of $\Gamma$. Denote by $\Gamma(N)$ the principal congruence subgroup of level $N$. The proof of our main result requires having a reasonably sized (normally) generating set for $\Gamma(N)$ and so we record this information here. The following is from \cite{Hsu}, where the proof can be found. In what follows, for a subset $K$ of a group $G$, we denote by $\langle \langle K \rangle \rangle $ the smallest normal subgroup of $G$ containing $K$. 

\begin{proposition}{\label{Hsu}} Let $N$ be an integer greater than one. Write $N=ek$ where $e$ is a power of two and $k$ is odd. 
\begin{enumerate}[(i)]

\item ($N$ is odd) Suppose $e=1$ and let $t(N)$ be the multiplicative inverse of 2 mod $N$. Let 

\begin{equation*}
G_N = \Set{ T^N, \, (U^2T^{-t(N)})^3}.
\end{equation*}

\item ($N$ is a power of two) Suppose $k=1$ and let $f(N)$ be the multiplicative inverse of 5 mod $N$. Set $P_N = T^{20} U^{f(N)} T^{-4} U^{-1}$ and let 
\begin{equation*}
G_N = \Set{ T^N, \,  (P_N U^5 TU^{\inv}T)^3, \, (TU^{\inv}T)^{-1}P_N (TU^{\inv}T) P_N }  
\end{equation*}

\item ($N$ even, not a power of two) Suppose $e>1$ and $k>1$. Let $c$ be the unique integer mod $N$ so that 
\begin{align*}
c &= 0 \mod e \\
c &= 1 \mod k 
\end{align*}
and let $d$ be the unique integer mod $N$ so that 
\begin{align*}
d &= 0 \mod k\\
d &=1 \mod e.
\end{align*}
Write $t(N)$ for the multiplicative inverse of 2 mod $k$ and $f(N)$ for  the multiplicative inverse of 5 mod $e$. Set

\begin{align*}
x &= T^c \quad & z =T^d \\
y &=U^c \quad & w  = U^d
\end{align*}
and $p_N = z^{20} w^{f(N)} z^{-4} w^{-1}$. Let $G_N$ be equal to 
\begin{multline*}
\left\{ T^N, \, [x,w], \, (xy^{\inv}x)^4, \, (xy^{\inv}x)^2(x^{\inv}y)^3, \, (xy^{\inv}x)^2(x^{t(N)}y^{-2})^3  \right. \\
\left. (zw^{\inv}z)^2(p_N w^5 zw^{\inv} z)^{-3}, \, (zw^{\inv}z)^{\inv} p_N (zw^{\inv}z)p_N, \, w^{25} p_N w^{\inv} p_N ^{\inv} \right\} . 
\end{multline*}
\end{enumerate}

Then in any of the above cases, $\Gamma(N) = \langle \langle G_N \rangle \rangle$. 

\end{proposition}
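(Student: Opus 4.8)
The plan is to prove each claimed identity $\Gamma(N) = \normcl{G_N}$ by establishing the two inclusions separately. The inclusion $\normcl{G_N} \subseteq \Gamma(N)$ is the routine direction: since $\Gamma(N)$ is normal in $\Gamma$, it suffices to check that every listed generator of $G_N$ already lies in $\Gamma(N)$, i.e. that it reduces to the identity of $\mathrm{PSL}(2,\ints/N\ints)$ modulo $N$. First I would carry these reductions out matrix by matrix. The relators are manifestly designed for this: $T^N \equiv I$ is immediate, and a factor such as $U^2 T^{-t(N)}$ reduces modulo $N$ to a matrix of determinant $1$ and trace $2-2t(N) \equiv 1$ (using $2t(N) \equiv 1$), which by Cayley--Hamilton satisfies $A^2 = A - I$ and hence $A^3 = -I$, so its cube is the identity in the projective quotient. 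The power-of-two relators exploit $5 f(N) \equiv 1$ in the same spirit, and the mixed-case relators reduce to the identity on each of the two coordinates supplied by the Chinese Remainder Theorem. Each of these is a finite computation valid for every $N$ in the relevant family.

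The substance is the reverse inclusion $\Gamma(N) \subseteq \normcl{G_N}$. Because the easy direction yields a surjection $\Gamma/\normcl{G_N} \twoheadrightarrow \Gamma/\Gamma(N) \cong \mathrm{PSL}(2,\ints/N\ints)$, it is enough to prove the order bound $|\Gamma/\normcl{G_N}| \le |\mathrm{PSL}(2,\ints/N\ints)|$; combined with the surjection this forces an isomorphism, whence the two kernels coincide. Equivalently, I would exhibit a presentation of $\mathrm{PSL}(2,\ints/N\ints)$ whose generators are the images of $T$ and $U$ and whose relators consist of the defining relators of $\Gamma$ --- the free-product relations $(TU^{\inv}T)^2 = (U^{\inv}T)^3 = 1$ --- together with the words in $G_N$. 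Granting such a presentation, the finitely presented group $\Gamma/\normcl{G_N}$ is isomorphic to $\mathrm{PSL}(2,\ints/N\ints)$ on the nose, which is exactly the claim.

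To produce this presentation I would reduce to two prime-power-type base cases and then glue. The order of $\mathrm{PSL}(2,\ints/N\ints)$ is multiplicative over coprime factors, and the isomorphism $\mathrm{SL}(2,\ints/N\ints) \cong \mathrm{SL}(2,\ints/e\ints) \times \mathrm{SL}(2,\ints/k\ints)$ for $N = ek$ with $\gcd(e,k)=1$ is precisely what case (iii) is engineered to mirror. Here $c$ and $d$ are the CRT idempotents: $c$ acts as $1$ modulo $k$ and as $0$ modulo $e$, and $d$ does the reverse, so $x = T^c,\, y = U^c$ carry the $k$-part while $z = T^d,\, w = U^d$ carry the $e$-part, and the relator $[x,w]$ forces the two factors to commute. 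Thus case (iii) should follow once cases (i) (odd $N$) and (ii) ($N$ a power of two) are established, each of which amounts to verifying a presentation of $\mathrm{SL}(2,\ints/p^a\ints)$, descended to $\mathrm{PSL}$, on the two unipotent generators. The appearance of $t(N) = 2^{-1}$ in the odd case and $f(N) = 5^{-1}$ in the power-of-two case reflects that $5$ together with $-1$ generates the (non-cyclic) unit group modulo $2^a$, whereas $2$ is the decisive quantity for odd moduli.

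The main obstacle is the order bound in these base cases: showing that the abstractly presented group $\langle T, U \mid \Gamma\text{-relations},\, G_N \rangle$ is no larger than $\mathrm{PSL}(2,\ints/p^a\ints)$. Lacking a ready-made presentation to invoke, I would obtain this by a Reidemeister--Schreier rewriting or a Todd--Coxeter coset enumeration inside $\Gamma \cong (\ints/2\ints)\ast(\ints/3\ints)$, tracking how the relators of $G_N$ collapse the coset space down to the expected size. Particular care is needed for small moduli and for the $p=2$ case, where the unit group is not cyclic and the relators built from $P_N$, as well as the commutator-type word $(TU^{\inv}T)^{\inv} P_N (TU^{\inv}T) P_N$, are exactly what is required to cut the quotient down correctly. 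Confirming that the listed relators suffice --- that nothing smaller than $\Gamma(N)$ is produced, and that no relator is missing --- is the delicate heart of the argument.
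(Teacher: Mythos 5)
The paper does not actually prove this proposition: it is quoted verbatim from \cite{Hsu} (``The following is from \cite{Hsu}, where the proof can be found''), so the comparison here is with Hsu's argument. Your high-level strategy is in fact the same as Hsu's: the containment $\normcl{G_N} \subseteq \Gamma(N)$ is a finite matrix check (your Cayley--Hamilton computation for $(U^2T^{-t(N)})^3$ is correct: the reduction has trace $2-2t(N)\equiv 1$ and determinant $1$, so its cube is $-I$, i.e.\ trivial in $\mathrm{PSL}$), and the reverse containment is obtained by showing that the modular-group relations together with $G_N$ present $\mathrm{PSL}(2,\ints/N\ints)$ on the images of $T$ and $U$, with the case $N=ek$ assembled from the odd and $2$-power cases via the CRT idempotents $c,d$.

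The genuine gap is exactly at what you call the ``delicate heart,'' and your proposed tool cannot close it. Reidemeister--Schreier rewriting or Todd--Coxeter enumeration is a finite computation for one fixed $N$ at a time; it cannot establish the order bound $|\Gamma/\normcl{G_N}| \le |\mathrm{PSL}(2,\ints/N\ints)|$ uniformly for the infinite families $N=k$ odd and $N=2^a$. What is actually needed is either an induction on prime powers (controlling the kernel of $\mathrm{PSL}(2,\ints/p^{a+1}\ints)\to\mathrm{PSL}(2,\ints/p^a\ints)$) or the citation of known presentations, which is what Hsu does: for odd modulus the presentation goes back to Sunday (building on Behr--Mennicke), whose relator $(U^2T^{-t})^3$ is precisely the one in case (i); for $2$-power modulus Hsu establishes the presentation himself, exploiting that $(\ints/2^a\ints)^\times = \langle -1\rangle \times \langle 5 \rangle$, which is why $f(N)=5^{-1}$ and the words in $P_N$ appear. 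Without these inputs your outline asserts the key bound rather than proving it. A second, smaller gap is in the gluing step: a presentation of the direct product requires \emph{all} cross-factor commutators to die, and while $[x,z]$ and $[y,w]$ are trivially trivial (powers of $T$, resp.\ of $U$), both $[x,w]=1$ and $[y,z]=1$ are needed; only the former is a listed relator, so the latter must be \emph{derived} from the remaining relators --- this derivation is part of the real content and is not addressed by saying that $[x,w]$ ``forces the two factors to commute.''
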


Suppose $G$ is a congruence subgroup of $\Gamma$. Since each of the sets in Proposition \ref{Hsu} contains  
\begin{equation*}
T^N = \begin{pmatrix} 1 & N \\ 0 & 1 \end{pmatrix} 
\end{equation*}
and since $\Gamma(N)$ is a normal subgroup, $G$ contains each of the conjugates of $T^N$. Of course, an arbitrary finite index subgroup contains $T^N$ for some integer $N$ as well. With this in mind, we make two definitions. 

\begin{definition} Let $G$ be a subgroup of $\Gamma$ with finite index $\mu$ and let $\phi: \Gamma \to S_\mu$ be the coset representation afforded by $G$. Define the {\bf{geometric level}} of $G$ to be $\glevel(G) = |\phi(T)|$. It is an exercise to show 
\begin{equation*}
\glevel(G) = \min\Set{ N \geq 1  |  \langle \langle T^N \rangle \rangle \subseteq G }.
\end{equation*} If we further take $G$ to be congruence subgroup of $\Gamma$ then we can define the {\bf{level}} of $G$ to be 
\begin{equation*} 
\level(G) = \min \Set{ N | \Gamma(N) \subseteq G }.
\end{equation*}
Of course, if $N = \level(G)$ then $G$ contains $\Gamma(M)$ for each multiple $M$ of $N$. 
\end{definition}

We have an easy consequence of the definition of geometric level in the case of a normal subgroup. 

\begin{proposition}{\label{keyprop}} Suppose $H$ is a finite group and $\varphi: \Gamma \to H$ is a homomorphism. Then $\glevel(\ker \varphi) = |\varphi(T)|$. 
\end{proposition}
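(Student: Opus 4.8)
The plan is to reduce the statement to the alternative description of geometric level already recorded in the definition, namely $\glevel(G) = \min\Set{ N \geq 1 | \langle \langle T^N \rangle \rangle \subseteq G}$, and then exploit that $\ker\varphi$ is normal. First I would note that $\ker\varphi$ is a finite-index subgroup of $\Gamma$, since $[\Gamma : \ker\varphi] = |\varphi(\Gamma)| \leq |H| < \infty$ by the first isomorphism theorem, so that the geometric level is in fact defined.

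The key reduction uses normality. Since $\ker\varphi$ is a normal subgroup of $\Gamma$, it contains the normal closure $\langle \langle T^N \rangle \rangle$ if and only if it contains $T^N$ itself: the forward implication is immediate because $T^N \in \langle \langle T^N \rangle \rangle$, and conversely, if $T^N \in \ker\varphi$ then normality forces every conjugate of $T^N$ into $\ker\varphi$, hence the whole subgroup they generate. Thus
\[
\glevel(\ker\varphi) = \min\Set{ N \geq 1 | T^N \in \ker\varphi}.
\]

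Finally I would translate membership in the kernel into an order condition: $T^N \in \ker\varphi$ is equivalent to $\varphi(T)^N = \varphi(T^N) = 1$ in $H$, which holds exactly when $|\varphi(T)|$ divides $N$. The least positive integer $N$ with this property is $|\varphi(T)|$ itself, giving $\glevel(\ker\varphi) = |\varphi(T)|$ as claimed.

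There is essentially no serious obstacle here; the one point requiring care is the equivalence $\langle \langle T^N \rangle \rangle \subseteq \ker\varphi \iff T^N \in \ker\varphi$, where normality of the kernel is indispensable — this is precisely the feature that makes a normal subgroup tractable, in contrast to a general finite-index subgroup, for which the geometric level need not equal the order of $\phi(T)$ read off from any single coset. Should one prefer an argument that does not invoke the ``exercise'' characterization, one can instead observe that, because $\ker\varphi$ is normal, the coset representation $\phi$ is equivalent to the left-regular representation of $\Gamma/\ker\varphi \cong \varphi(\Gamma)$, under which $T$ acts as left multiplication by $\varphi(T)$; the order of such a permutation equals the order of $\varphi(T)$, yielding the same conclusion.
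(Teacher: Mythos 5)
Your proof is correct and follows the same route as the paper: the paper's (one-line) argument likewise uses normality of $\ker\varphi$ to reduce the normal-closure condition $\langle\langle T^N \rangle\rangle \subseteq \ker\varphi$ to the membership condition $T^N \in \ker\varphi$, which is governed by the order of $\varphi(T)$. You have simply spelled out the steps the paper leaves implicit, so no further changes are needed.
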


\begin{proof} Since $ \ker \varphi$ is a normal subgroup of $\Gamma$, we see that 
\begin{equation*}
\glevel(\ker \varphi) = \min\Set{N \geq 1  | T^N \in \ker \varphi } = |\varphi(T)|
\end{equation*} as desired. \end{proof}

\begin{observation}{\label{glevelless}}
For a congruence subgroup $G$, we see immediately from their definitions that $\glevel(G) \leq \level(G)$. In fact, it is easy to prove $\glevel(G)$ divides $\level(G)$, although we will not need this. 
\end{observation}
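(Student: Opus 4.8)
The plan is to read off both assertions directly from the two defining sets, using only the normality of the principal congruence subgroups recorded in Proposition \ref{Hsu}.

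First I would establish the inequality. Set $L = \level(G)$, so that $\Gamma(L) \subseteq G$ by definition of the level. Since $T^L \equiv I \pmod{L}$, we have $T^L \in \Gamma(L)$, and because $\Gamma(L)$ is normal in $\Gamma$ the entire normal closure satisfies $\normcl{T^L} \subseteq \Gamma(L) \subseteq G$. Hence $L$ lies in the set $\Set{ N \geq 1 | \normcl{T^N} \subseteq G }$ whose minimum defines $\glevel(G)$, and therefore $\glevel(G) \leq L = \level(G)$.

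For the divisibility refinement I would pass to the coset representation $\phi : \Gamma \to S_\mu$ used to define $\glevel(G)$, the goal being to identify the set $\Set{ N | \normcl{T^N} \subseteq G }$ precisely with the set of positive multiples of $\glevel(G)$. The kernel of $\phi$ is the normal core $\bigcap_{\gamma \in \Gamma} \gamma G \gamma^{-1}$ of $G$, so $\normcl{T^N} \subseteq G$ holds if and only if every conjugate of $T^N$ lies in $G$, that is, if and only if $T^N \in \ker \phi$, i.e. $\phi(T)^N = 1$. Since $\glevel(G) = |\phi(T)|$, this last condition is equivalent to $\glevel(G) \mid N$. Applying this equivalence with $N = \level(G)$—which belongs to the set by the first paragraph—yields $\glevel(G) \mid \level(G)$, which is stronger than the inequality.

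The argument is entirely formal, and the only step requiring any care is the identification of $\ker \phi$ with the normal core of $G$, and hence of the condition $\normcl{T^N} \subseteq G$ with $\phi(T)^N = 1$; this is exactly the characterization of $\glevel(G)$ as $\min\Set{ N \geq 1 | \normcl{T^N} \subseteq G }$ asserted (and left as an exercise) in the definition of geometric level. Everything else is a direct appeal to the normality of $\Gamma(N)$ and the containment $T^N \in \Gamma(N)$.
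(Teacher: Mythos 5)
Your proof is correct and is exactly the argument the paper leaves implicit: the inequality comes from $T^{L} \in \Gamma(L)$ plus normality of $\Gamma(L)$, and the divisibility from identifying $\ker \phi$ with the normal core of $G$, so that $\normcl{T^N} \subseteq G$ if and only if $|\phi(T)| \mid N$. As a bonus, your second paragraph simultaneously establishes the characterization $\glevel(G) = \min\Set{ N \geq 1 | \normcl{T^N} \subseteq G }$ that the paper's definition states as an exercise.
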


\subsection{The Fricke-Wohlfahrt theorem} Here we further explore the relationship between the two notions of level introduce above. Observation \ref{glevelless} leads us to investigate the reverse inequality. While both these numbers need not be defined for arbitrary finite index subgroups of $\Gamma$, it is the case that when both are defined,  they must agree. This result was originally proved by Fricke \cite{fricke} and later explored and expanded upon by Wohlfahrt \cite {Wohl} and therefore bears both of their names.  The majority of the proof is straightforward applications of linear congruences. This result is not entirely elementary, however, since it relies on the existence of arbitrary large primes in certain arithmetic progressions, a highly nontrivial fact from analytic number theory. 

\begin{theorem}[Fricke-Wohlfahrt theorem]{\label{fricke}} Suppose $G$ is a congruence subgroup of $\Gamma$. Then the level of $G$ equals the geometric level of $G$.  
\end{theorem}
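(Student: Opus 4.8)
The plan is to prove the single nontrivial inequality $\level(G) \le \glevel(G)$; combined with Observation \ref{glevelless}, which already supplies $\glevel(G) \le \level(G)$, this forces equality. Write $n = \glevel(G)$ and $N = \level(G)$. By Observation \ref{glevelless} we have $n \mid N$, so $\Gamma(N) \subseteq \Gamma(n)$ and $T^n \in \Gamma(n)$. I claim the whole theorem reduces to showing $\Gamma(n) \subseteq G$, for then $n$ is a candidate in the definition of $\level(G)$ and hence $\level(G) \le n$. Since $G$ is congruence of level $N$ we have $\Gamma(N) \subseteq G$, and since $n$ is the geometric level the definition gives $\normcl{T^n} \subseteq G$. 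Thus it is enough to establish the purely group-theoretic identity
\begin{equation*}
\Gamma(n) = \langle\, \Gamma(N),\ \normcl{T^n} \,\rangle,
\end{equation*}
valid for every $n \mid N$. The inclusion $\supseteq$ is immediate because $\Gamma(N) \subseteq \Gamma(n)$, $T^n \in \Gamma(n)$, and $\Gamma(n)$ is normal in $\Gamma$; the real content is the inclusion $\subseteq$.

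To prove $\subseteq$ I would pass to the finite quotient $\Gamma/\Gamma(N) \cong \mathrm{PSL}(2,\ints/N\ints)$. Under this reduction $\Gamma(n)$ maps onto $K := \ker\bigl(\mathrm{PSL}(2,\ints/N\ints) \to \mathrm{PSL}(2,\ints/n\ints)\bigr)$, and since $\Gamma(N)$ is the kernel of the reduction it suffices to show that $K$ is the normal closure of the image $\bar T^n$. A direct computation shows that a conjugate $\gamma T^n \gamma^{-1}$ equals $I + n\,\sigma(p,r)$, where $\sigma(p,r) = \begin{pmatrix} -pr & p^2 \\ -r^2 & pr \end{pmatrix}$ and $(p,r)$ is the first column of $\gamma$, necessarily a primitive integer vector. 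Using the Chinese remainder theorem to split $\mathrm{SL}(2,\ints/N\ints) \cong \prod_p \mathrm{SL}(2,\ints/p^{a}\ints)$ reduces the problem to a single prime power, where $K$ is a $p$-group filtered by the subgroups $\{g \equiv I \bmod p^{j}\}$ with successive quotients isomorphic to the trace-zero matrices $\mathfrak{sl}_2(\mathbb{F}_p)$. On the top graded layer the elements $I + n\,\sigma(p,r)$ act additively through $\sigma(p,r)$, and since $\sigma(1,0)$, $\sigma(0,1)$, $\sigma(1,1)$ form a $\ints$-basis of $\mathfrak{sl}_2$ these conjugates already span that layer; a downward induction along the filtration, using commutators to descend to the deeper layers, then produces all of $K$.

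The step I expect to be genuinely delicate — and the only non-elementary one — is ensuring that every conjugate of $\bar T^n$ by an element of the finite group $\mathrm{SL}(2,\ints/N\ints)$ is actually realized by a conjugate of $T^n$ by an element of $\Gamma$; equivalently, that each primitive residue vector $(p,r) \bmod N$ lifts to a genuinely coprime integer pair. This is exactly where Dirichlet's theorem on primes in arithmetic progressions enters: given a prescribed primitive vector modulo $N$, one fixes one coordinate in its residue class and chooses the other to be a large prime in the appropriate arithmetic progression, which guarantees coprimality and hence an $\mathrm{SL}(2,\ints)$ matrix with the desired first column. With this lifting in hand the reduction $\Gamma \to \mathrm{PSL}(2,\ints/N\ints)$ is surjective and the normal closure of $\bar T^n$ computed in the finite group coincides with the image of $\normcl{T^n}$, which closes the argument. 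The remaining ingredients — the Chinese remainder splitting, the computation of $\sigma(p,r)$, and the filtration induction (including the mild bookkeeping from passing between $\mathrm{SL}$ and $\mathrm{PSL}$, i.e.\ the scalar $\pm I$) — are routine, so the proof hinges on this one arithmetic input, precisely as the surrounding text anticipates.
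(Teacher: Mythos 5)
Your overall skeleton coincides with the paper's: both arguments reduce the theorem to showing $\Gamma(\glevel(G)) \subseteq G$ using only the two inclusions $\Gamma(\level(G)) \subseteq G$ and $\normcl{T^{\glevel(G)}} \subseteq G$, i.e.\ to the identity $\Gamma(n) = \langle \Gamma(N), \normcl{T^n}\rangle$ for $n \mid N$. But your proof of that identity is genuinely different. The paper works inside $\Gamma$ itself: it decomposes $\Gamma(m)$ into the three explicit sets $\mathcal{E}_1, \mathcal{E}_2, \mathcal{E}_3$ and pushes a given integral matrix into $G$ by multiplying by $T^m$ and $ST^mS$, invoking Dirichlet's theorem to replace a diagonal entry by a large prime so as to land in $\mathcal{E}_2$. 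You instead pass to the finite quotient $\mathrm{PSL}(2,\ints/N\ints)$, split by CRT, and fill the congruence kernel along the $p$-adic filtration, using the exact identity $\gamma T^{c} \gamma^{-1} = I + c\,\sigma(p,r)$ together with the fact that $\sigma(1,0), \sigma(0,1), \sigma(1,1)$ form a $\ints$-basis of the trace-zero integral matrices (the change-of-basis matrix is unimodular, so they span modulo every prime, including $2$ and $3$). Done correctly, your route buys something real: it is entirely elementary and dispenses with analytic number theory altogether. (One small debt: your appeal to $n \mid N$ uses the divisibility the paper asserts as easy but does not prove; this is harmless, and avoidable by replacing $N$ with $\mathrm{lcm}(n,N)$.)

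Two of your mechanisms, however, would fail as literally stated. First, the descent ``using commutators'' breaks at $p=2$: the graded pieces are $\mathfrak{sl}_2(\mathbb{F}_p)$, and $\mathfrak{sl}_2(\mathbb{F}_2)$ is not perfect (its derived algebra is spanned by $[E,F] = H \equiv I \bmod 2$), so commutators cannot reach the deeper layers there. Fortunately you do not need commutators at all: the normal closure contains every power $T^{kn}$, and conjugating $T^{np^{j-b}}$ produces elements exactly of the form $I + p^{j}t\,\sigma(p,r)$ at every depth $j$, so the top-layer spanning argument runs verbatim at each layer. Second, the Dirichlet lifting is both unnecessary and, as described, incorrect: a primitive vector $(p,r)$ mod $N$ satisfies only $\gcd(p,r,N)=1$, not $\gcd(p,N)=1$, and the progression $p + N\ints$ contains no primes when $\gcd(p,N)>1$ (try $(p,r)=(2,1)$ mod $4$). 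The lift exists by an elementary CRT argument: fix a lift $p_0$, then choose $t$ so that $r_0 + tN$ avoids the residue $0$ modulo each prime dividing $p_0$ but not $N$, primitivity disposing of the primes dividing both; this is the classical, Dirichlet-free surjectivity of $\mathrm{SL}(2,\ints) \to \mathrm{SL}(2,\ints/N\ints)$ (and of $\Gamma(n)$ onto the relative kernel). So the step you flagged as ``the only non-elementary one'' is in fact elementary; the irony is that the paper's passage through $\mathcal{E}_3$ is where Dirichlet genuinely enters its proof, while your approach, once these two repairs are made, avoids it completely.
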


As a corollary, we apply Proposition \ref{keyprop} to get the central tool in the proof of our main theorem. 

\begin{corollary}{\label{maincorollary}} Suppose $H$ is a finite group and $\varphi: \Gamma \to H$ is a homomorphism and let $m = |\varphi(T)|$. Then $\ker \varphi$ is a congruence subgroup if and only if $\Gamma(m) \subseteq \ker \varphi$. 
\end{corollary}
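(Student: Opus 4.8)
The plan is to deduce the corollary directly by combining Proposition \ref{keyprop} with the Fricke-Wohlfahrt theorem; both implications are then essentially bookkeeping, with all the genuine difficulty already absorbed into Theorem \ref{fricke}. First I would record the standing setup: since $H$ is finite, $\ker \varphi$ is a normal subgroup of $\Gamma$ of finite index (its index divides $|H|$), so that $\glevel(\ker \varphi)$ is defined and the notion of being congruence applies to it. Applying Proposition \ref{keyprop} to this very $\varphi$ gives $\glevel(\ker \varphi) = |\varphi(T)| = m$ outright, which is the identification I will lean on in both directions.

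For the reverse implication $(\Leftarrow)$, if $\Gamma(m) \subseteq \ker \varphi$ then $\ker \varphi$ is a finite-index subgroup of $\Gamma$ containing a principal congruence subgroup, hence is congruence straight from the definition (the degenerate case $m = 1$, where $\Gamma(1) = \Gamma$, is harmless since $\Gamma \supseteq \Gamma(2)$ is itself congruence). For the forward implication $(\Rightarrow)$, suppose $\ker \varphi$ is congruence. Then Theorem \ref{fricke} applies and yields $\level(\ker \varphi) = \glevel(\ker \varphi)$, which by the previous paragraph equals $m$. Unwinding the definition $\level(\ker \varphi) = \min\Set{ N | \Gamma(N) \subseteq \ker \varphi }$, the minimizing value of $N$ is exactly $m$, so in particular $\Gamma(m) \subseteq \ker \varphi$, as claimed.

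The only step carrying real content is the forward direction, and even there the work is outsourced. Without Fricke-Wohlfahrt one would know only the inequality $\glevel(\ker \varphi) \leq \level(\ker \varphi)$ from Observation \ref{glevelless}, which would establish that $\ker \varphi$ contains $\Gamma(\level(\ker \varphi))$ but would not pin the level down to the readily computable number $m$. The main obstacle is therefore precisely the equality of geometric level and level for congruence subgroups, namely Theorem \ref{fricke}, whose proof rests on the existence of large primes in arithmetic progressions; since that theorem is taken as given here, the corollary follows with no further analytic input.
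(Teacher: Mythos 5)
Your proof is correct and takes essentially the same route as the paper, which derives the corollary by combining Proposition \ref{keyprop} (giving $\glevel(\ker\varphi) = |\varphi(T)| = m$) with the Fricke--Wohlfahrt theorem (Theorem \ref{fricke}), exactly as you do. Your additional bookkeeping --- the finite index of $\ker\varphi$, the degenerate case $m=1$, and the unwinding of the definition of $\level$ --- fills in details the paper leaves implicit but introduces nothing new.
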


\begin{proof}[Proof of Fricke-Wohlfahrt]   Let $m = \glevel(G)$ and $n = \level(G)$. We will prove that $\Gamma(m) \subseteq G$. Since $G$ has level $n$, this implies $n \leq m$  and therefore $m = n$ by Observation \ref{glevelless}. More specifically, we will define subsets $\mathcal{E}_1, \mathcal{E}_2$, and $\mathcal{E}_3$ of $\Gamma(m)$ whose union equals $\Gamma(m)$ and show that each $\mathcal{E}_i$ is a subset of $G$. To this end, let $A \in \Gamma(m)$ and write 
\begin{equation*}
A = \begin{pmatrix} a & b \\ c & d \end{pmatrix}
\end{equation*} where (without loss of generality) $a=d=1$ mod $m$, $b=c=0$ mod $m$, and $ad-bc=1$. 

First, let $\mathcal{E}_1$ be the collection of elements whose upper right and lower left entries are divisible by $n$ and suppose $A \in \mathcal{E}_1$. If we let 
\begin{equation*}
A_0 = \begin{pmatrix} a & ad-1 \\ 1-ad & d(2-ad) \end{pmatrix} 
\end{equation*}
then since $ad-bc =1$ we have $ad=1 \mod n$ and so $A = A_0 \mod n$. Equivalently, $AA_0^{\inv} \in \Gamma(n) \subseteq G$. It suffices to show $A_0 \in G$. This follows from writing $V = TU^{\inv}T^3U^{\inv}T$ and computing $A_0 = (ST^{d-1}S)^{\inv} (VT^{a-1}V^{\inv} )T^{d-1}$  is an element of $\normcl{T^m}$ since $a=d=1 \mod m$.

Now let $\mathcal{E}_2$ be the set of matrices who diagonal entries are relatively prime to $n$. If $A \in \mathcal{E}_2$ then $\gcd(a,n)=1$ so $a$ is a unit mod $n$ say with inverse $a^\prime$. We can write $c = m\tilde{c}$ and let $ k = -a^\prime \tilde{c}$. Then 
\begin{equation*}
(ST^mS)^{-k} A = \begin{pmatrix} 1 & 0 \\ mk & 1 \end{pmatrix} \begin{pmatrix} a & b \\ c & d \end{pmatrix} = \begin{pmatrix} a & b \\ c+amk & d+bmk\end{pmatrix}
\end{equation*}
and $c+amk = m(\tilde{c} -a^\prime a \tilde{c})$ is congruent to 0 mod $n$. Let $\tilde{d} = d+bmk$. Taking the determinant of $(ST^mS)^{-k} A $, we see that $\tilde{d}$ is relatively prime to $n$. Therefore we can also choose $\ell$ so that $b+\tilde{d}m\ell =0$ mod $n$. In this case, 
\begin{equation*} T^{m\ell} (ST^mS)^{-k} A  = \begin{pmatrix} 1 & m\ell \\ 0 & 1 \end{pmatrix} \begin{pmatrix} a & b \\ c+amk & \tilde{d} \end{pmatrix} = \begin{pmatrix} \ast & b+\tilde{d}m\ell \\ c+amk &  \tilde{d} \end{pmatrix}
\end{equation*}
and this is an element of $\mathcal{E}_1$. This says $A$ is an element of $\normcl{T^m}\mathcal{E}_1$ and is therefore in $G$. 

Finally, define $\mathcal{E}_3  =\Gamma(m) \setminus (\mathcal{E}_1 \cup \mathcal{E}_2)$. If $A \in \mathcal{E}_3$ then $\gcd(a,n)\not =1$ or $\gcd(d,n) \not= 1$. Suppose first that $\gcd(a,n)\not =1$. Since $ad-bc=1$ we know $\gcd(a,b)= 1$. Then since $a =1$ mod $m$, we also have $\gcd(a,bm) = 1$. Recall Dirichlet's theorem on primes in arithmetic progressions: Suppose $r$ and $s$ are relatively prime integers. Then 
\begin{equation*}
\Set{ r+st | t \in \ints }
\end{equation*}
contains infinitely many prime numbers. In particular, taking $r=a$ and $s=bm$ we can choose an integer $k$ so that $a+bmk$ is a prime number larger than $n$. Then 
\begin{equation*}
A(ST^mS)^{-k}=\begin{pmatrix} a & b \\ c & d \end{pmatrix} \begin{pmatrix} 1 & 0 \\ mk & 1 \end{pmatrix} = \begin{pmatrix} a + bmk & b \\ c+ dmk & d  \end{pmatrix} 
\end{equation*}
has upper left entry relatively prime to $n$. If $\gcd(d,n) = 1$ then $A(ST^mS)^{-k} \in \mathcal{E}_2$ and we are done. Otherwise, let $\tilde{a} = a+bmk$, let $\tilde{c} = c+dmk$ and note that $\tilde{a}d-b\tilde{c} =1$ so $\gcd(\tilde{c},d) = 1$. As $\gcd(d,m)=1$, just as above we can choose $\ell$ so that $d+\tilde{c}m\ell$ is a prime number larger than $n$. Then 
\begin{equation*}
A(ST^mS)^{-k} T^{m\ell} = \begin{pmatrix} \tilde{a} & b \\ \tilde{c} & d \end{pmatrix} \begin{pmatrix} 1 & m\ell \\ 0 & 1 \end{pmatrix} = \begin{pmatrix} \tilde{a} & \ast \\ \tilde{c} & d+\tilde{c}mk \end{pmatrix}
\end{equation*}
and $A(ST^mS)^{-k} T^{m\ell}$ must be in $\mathcal{E}_2$ Thus $A$ is an element of  $\mathcal{E}_2 \normcl{T^m}$ and the theorem follows. 
\end{proof}

\subsection{Low-dimensional representations of $B_3$} In this section collect some known results about representations of $B_3$ of small dimension. One major theorem in this direction is the Tuba-Wenzl classification. Indeed, a representation of dimension between two and five can be conjugated so that $\sigma_1$ (resp. $\sigma_2$) is mapped to an upper (resp. lower) triangular matrix. We record the two and three-dimensional version here. 

\begin{theorem}[TW classificiation in dimension two and three]{\label{TW}}\leavevmode
\begin{enumerate}[(i)] \item Let $N_2$ be the zero set of $\lambda_1^1 + \lambda_1\lambda_2  +\lambda_2^2$ and let $N_3$ be the zero set of $\Set{ \lambda_j^2 + \lambda_k\lambda_{\ell} | \set{j,k,l} = \set{1,2,3}}$. Then for $d=2$ or $3$ there is a bijection between conjugacy classes of irreducible $d$-dimensional representations of $B_3$ and $S_d$-orbits of $\C^d \setminus N_d$. 

\item Suppose $\rho: B_3 \to \mathrm{GL}(2,\C)$ is irreducible and $\spec(\rho(\sigma_1)) = \set{\lambda_1, \lambda_2}$. Then up to conjugation, 
\begin{equation*}
\rho(\sigma_1) = \begin{pmatrix} \lambda_1 & \lambda_1 \\ 0 & \lambda_2 \end{pmatrix} \quad \rho(\sigma_2) = \begin{pmatrix} \lambda_2 & 0 \\ - \lambda_2 & \lambda_1 \end{pmatrix}.
\end{equation*}
Furthermore, $\rho$ factors through $\pi$ if and only if $-(\lambda_1\lambda_2)^3=1$. 

\item Suppose $\rho: B_3 \to \mathrm{GL}(3,\C)$ is irreducible and $\spec(\rho(\sigma_1)) = \set{\lambda_1, \lambda_2, \lambda_3}$. Then up to conjugation, 
\begin{equation*}
\rho(\sigma_1) = \begin{pmatrix} \lambda_1 & \lambda_1\lambda_3\lambda_2^{\inv} + \lambda_2 & \lambda_2 \\ 0 & \lambda_2 & \lambda_2 \\ 0 & 0 & \lambda_3 \end{pmatrix} \quad \rho(\sigma_2) = \begin{pmatrix} \lambda_3 & 0 & 0 \\ -\lambda_2 & \lambda_2 & 0 \\ \lambda_2 & -\lambda_1\lambda_3\lambda_2^{\inv} - \lambda_2 & \lambda_1 \end{pmatrix} . 
\end{equation*}
Furthermore, $\rho$ factors through $\pi$ if and only if $(\lambda_1\lambda_2\lambda_3)^2 = 1$. 
\end{enumerate}
\end{theorem}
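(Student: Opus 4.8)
The plan is to split the statement into its structural part---the normal forms of (ii) and (iii), the exceptional sets $N_d$, and the resulting bijection of (i)---and its arithmetic part, namely the two criteria for factoring through $\pi$. The structural part is the Tuba--Wenzl classification, which I would establish by the triangularization argument sketched below; the factoring criteria are then short consequences of Schur's lemma. (That the displayed matrices satisfy $ABA=BAB$, and so define a representation, is a direct check; the substantive direction is that every irreducible representation is conjugate to one of them.)

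For the normal forms, I would first note that $\sigma_1$ and $\sigma_2$ are conjugate in $B_3$: writing $\Delta = \sigma_1\sigma_2\sigma_1$, the braid relation gives $\sigma_1\Delta = \Delta\sigma_2$, so $\sigma_2 = \Delta^{\inv}\sigma_1\Delta$. Hence $A = \rho(\sigma_1)$ and $B = \rho(\sigma_2)$ have the same spectrum $\{\lambda_1,\dots,\lambda_d\}$. Fixing an ordering of the eigenvalues, conjugate $A$ to upper-triangular form with diagonal $\lambda_1,\dots,\lambda_d$. The core of the argument is to show that, using the braid relation $ABA=BAB$ together with irreducibility, one may simultaneously conjugate so that $B$ becomes lower-triangular with the reversed diagonal $\lambda_d,\dots,\lambda_1$; the remaining gauge freedom is conjugation by the diagonal torus, which preserves the triangular shapes and rescales the off-diagonal entries while fixing the invariant monomials pinned down by the braid relation. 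Normalizing by this torus produces exactly the displayed matrices. Permuting the chosen eigenvalue ordering realizes the $S_d$-action, and two orderings give conjugate representations iff they lie in the same $S_d$-orbit, yielding the bijection of (i). Finally, $N_d$ is characterized as the locus of spectra for which the resulting $A$ and $B$ are forced to share a proper common invariant subspace: solving directly for a common eigenvector of the two displayed matrices produces the defining polynomial equations of $N_2$ and $N_3$, and off $N_d$ the absence of such a subspace gives irreducibility.

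For the factoring criteria, recall from the central extension that $\ker\pi = Z(B_3) = \langle(\sigma_1\sigma_2)^3\rangle$, so $\rho$ factors through $\pi$ iff $\rho((\sigma_1\sigma_2)^3) = I$. Put $C = \rho(\sigma_1\sigma_2) = AB$. Since $(\sigma_1\sigma_2)^3$ is central and $\rho$ is irreducible, Schur's lemma gives $C^3 = cI$ for a scalar $c$, and factoring through $\pi$ is equivalent to $c = 1$. From the normal forms, $\det C = \det A\,\det B = (\lambda_1\cdots\lambda_d)^2$, while a one-line computation of the diagonal of $AB$ gives $\tr C = \lambda_1\lambda_2$ when $d=2$ and $\tr C = 0$ when $d=3$. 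Because $C^3 = cI$, every eigenvalue of $C$ is a cube root of $c$. For $d=2$ the eigenvalues $\mu_1,\mu_2$ satisfy $\mu_1+\mu_2 = \lambda_1\lambda_2$ and $\mu_1\mu_2 = (\lambda_1\lambda_2)^2$, hence $\mu_i = \lambda_1\lambda_2\,\tfrac{1\pm\sqrt{-3}}{2}$ and $c = \mu_i^3 = -(\lambda_1\lambda_2)^3$; thus $\rho$ factors through $\pi$ iff $-(\lambda_1\lambda_2)^3 = 1$. For $d=3$ the three eigenvalues are cube roots of the nonzero scalar $c$ that sum to $0$, and such a multiset must consist of all three distinct roots $\gamma,\gamma\omega,\gamma\omega^2$ (with $\gamma^3=c$ and $\omega$ a primitive cube root of unity), whose product is $\gamma^3 = c$; comparing with $\det C = (\lambda_1\lambda_2\lambda_3)^2$ gives $c = (\lambda_1\lambda_2\lambda_3)^2$, so $\rho$ factors through $\pi$ iff $(\lambda_1\lambda_2\lambda_3)^2 = 1$.

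The main obstacle is the structural step: showing that $B$ is \emph{forced} into the reversed lower-triangular form (rather than merely assuming it) and computing the exact exceptional sets $N_d$. This is where the braid relation must be unpacked entry-by-entry against irreducibility, with the degenerate cases of repeated eigenvalues requiring separate attention; it is precisely the content of Tuba--Wenzl's analysis. By contrast, once the forms are in hand, the factoring criteria follow from the determinant/trace computation above with no further input.
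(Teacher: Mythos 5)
Your proposal is sound, but there is no in-paper proof to compare against: the paper records Theorem \ref{TW} as background, quoting the Tuba--Wenzl classification without proof, and it likewise uses the factoring criteria without derivation (e.g.\ ``we computed earlier that $(\sigma_1\sigma_2)^3$ acts by $-(\lambda_1\lambda_2)^3$'' in the proof of Proposition \ref{maingamma2}). Your treatment of the structural part matches the paper's in that it is ultimately a citation: your sketch --- conjugacy of $\sigma_1,\sigma_2$ via $\Delta=\sigma_1\sigma_2\sigma_1$ using $\sigma_1\Delta=\Delta\sigma_2$, simultaneous upper/lower triangularization with reversed diagonals, normalization by the diagonal torus, $S_d$-orbits giving the bijection, and $N_d$ as the locus where the displayed matrices acquire a common invariant subspace --- is an accurate outline of Tuba--Wenzl's argument, and you correctly flag that the entry-by-entry forcing is the real content you are not reproving. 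What you add beyond the paper is a correct, self-contained derivation of the two factoring criteria, and the computations check out: $\ker\pi=Z(B_3)=\langle(\sigma_1\sigma_2)^3\rangle$, Schur's lemma gives $C^3=cI$ for $C=\rho(\sigma_1\sigma_2)$; for $d=2$ one indeed gets $\tr C=\lambda_1\lambda_2$ and $\det C=(\lambda_1\lambda_2)^2$, hence eigenvalues $\lambda_1\lambda_2\,e^{\pm\pi i/3}$ and $c=-(\lambda_1\lambda_2)^3$; for $d=3$ one gets $\tr C=0$, and since $x^3-c$ has distinct roots $C$ is diagonalizable, so the trace-zero condition forces the spectrum $\{\gamma,\gamma\omega,\gamma\omega^2\}$ and $c=\det C=(\lambda_1\lambda_2\lambda_3)^2$ --- this last step is worth keeping explicit, since $\det(C^3)=c^3$ alone only determines $\det C$ up to a cube root of unity. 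Two minor notes: the statement's $\lambda_1^1$ in the definition of $N_2$ is a typo for $\lambda_1^2$, and your torus-normalization step in dimension three would, in a full write-up, need the repeated-eigenvalue degeneracies handled as in Tuba--Wenzl (irreducibility rules out $\lambda_i=\lambda_j$ only in combination with the $N_d$ condition, not automatically).
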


The task of determining if the image of an irreducible representation of dimension between two and five is finite or infinite was initiated in \cite{RT} and there the authors' main result showed that for a two or three-dimensional representation this is a property of the eigenvalues of the image of $\sigma_1$ under the representation. A version of their results in dimensions two and three is included below.

\begin{definition} Let $A$ be an $n \times n$ matrix with eigenvalues $\lambda_1, \lambda_2, \ldots, \lambda_n$. Define the {\bf{projective order}} of $A$ to be 
\begin{equation*}
\po(A) = \min\Set{  t>1  |  \lambda_1^t  = \cdots = \lambda_n^t }
\end{equation*}
where this is allowed to be infinite. The projective order of a matrix is invariant under scaling. That is, $\po(A) = \po(\theta A)$ for all $\theta \in \C^\ast$. 
\end{definition}

\begin{theorem}{\label{RT}}Let $d=2$ or $3$ and suppose $\rho: B_3 \to \mathrm{GL}(d,\C)$ is irreducible with $\spec(\rho(\sigma_1)) = \{\lambda_1, \ldots, \lambda_d\}$. 
\begin{enumerate}[(a)]
\item If some $\lambda_i$ is not a root of unity then the image of $\rho$ is infinite. 
\item If $\lambda_i = \lambda_j$ for some $i \not= j$ then the image of $\rho$ is infinite. 
\item If $\set{ \lambda_1, \ldots, \lambda_d}$ consists of distinct roots of unity and $2 \leq \po(\rho(\sigma_1)) \leq 5$ then the image of $\rho$ is finite. 
\item If $d=2$ and the image of $\rho$ is finite then $2 \leq \po(\rho(\sigma_1)) \leq 5$. 
\item If $d=3$ and $\set{\lambda_1, \, \lambda_2, \, \lambda_3} = \set{\pm \lambda, \mu}$ for some distinct roots of unity $\lambda,\mu$ then the image of $\rho$ is finite. 
\end{enumerate}

\end{theorem}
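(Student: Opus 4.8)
The plan is to separate the five parts into the two infinite-image statements (a), (b), which follow by inspection of the Tuba--Wenzl normal forms of Theorem \ref{TW}, and the three finite-image statements (c)--(e), which I would handle by passing to the projectivization $\bar\rho\colon B_3\to \mathrm{PGL}(d,\C)$ and using that irreducibility forces the central element $(\sigma_1\sigma_2)^3$ to act by a scalar (Schur's lemma), so that $\bar\rho$ factors through $B_3/\langle(\sigma_1\sigma_2)^3\rangle=\Gamma=\PSL(2,\ints)$ and sends $\sigma_1$ to the parabolic generator $T$.

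For (a), if some $\lambda_i$ is not a root of unity then the eigenvalues $\lambda_i^n$ of $\rho(\sigma_1)^n$ are pairwise distinct, so $\rho(\sigma_1)$ has infinite order and the image is infinite. For (b), I would read off from Theorem \ref{TW}(ii)--(iii) that a repeated eigenvalue, together with the irreducibility hypothesis (staying off the locus $N_d$), leaves a nonzero superdiagonal entry joining two equal diagonal entries; e.g.\ in dimension two $\rho(\sigma_1)=\left(\begin{smallmatrix}\lambda&\lambda\\0&\lambda\end{smallmatrix}\right)$ is a single nontrivial Jordan block. Hence $\rho(\sigma_1)$ is not semisimple, has infinite order, and the image is infinite. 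The only configuration in which a repeated eigenvalue could leave $\rho(\sigma_1)$ semisimple is the spectrum $\{\lambda,\lambda,-\lambda\}$ in dimension three; I would check directly that this configuration is excluded by irreducibility, as it meets $N_3$ (the relevant factor $\lambda_1^2+\lambda_2\lambda_3$ vanishes there).

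For (c), since the eigenvalues are distinct $\rho(\sigma_1)$ is diagonalizable, so $\bar\rho(\sigma_1)=\psi(T)$ has order exactly $p:=\po(\rho(\sigma_1))$, where $\psi\colon\Gamma\to\mathrm{PGL}(d,\C)$ is the induced map. Writing $\Gamma=\langle S,T\mid S^2=(ST)^3=1\rangle$ exhibits $\Gamma$ as the $(2,3,\infty)$ triangle group with $T=S\cdot(ST)$ the cusp element, so killing $T^p$ produces the triangle group $\Delta(2,3,p)$, which is finite precisely when $\tfrac12+\tfrac13+\tfrac1p>1$, i.e.\ $p\le 5$. Thus $\bar\rho(B_3)$ is finite; and $\rho(B_3)$ is a central extension of it by its group of scalar matrices, which consists of roots of unity and has finite index in the finitely generated group $\rho(B_3)$, hence is finite. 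So $\rho(B_3)$ is finite. For (d) I combine this with the structure theory in low rank: if $d=2$ and the image is finite then (a), (b) make the eigenvalues distinct roots of unity, so $p\ge 2$ is defined, and $\bar\rho(B_3)$ is a finite subgroup of $\PSL(2,\C)$ --- cyclic, dihedral, $A_4$, $S_4$, or $A_5$. Irreducibility rules out the abelian case (there $\bar\rho(\sigma_1)=\bar\rho(\sigma_2)$ would force $\rho(\sigma_1),\rho(\sigma_2)$ to commute and share an eigenvector), and in the dihedral case the two conjugate generators must be reflections, giving $p=2$; the maximal element orders in $A_4,S_4,A_5$ are $3,4,5$. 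In every surviving case $p\le 5$.

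Part (e) is the genuinely delicate one and where I expect the main obstacle. Here $\po(\rho(\sigma_1))$ can be arbitrarily large, since the ratio $\lambda/(-\lambda)=-1$ only forces it to be even, so the triangle-group argument of (c) breaks down ($\Delta(2,3,p)$ is infinite for $p\ge 6$) and finiteness must be extracted from the special spectrum $\{\pm\lambda,\mu\}$. The key observation is that $\rho(\sigma_1^2)$ has eigenvalues $\{\lambda^2,\lambda^2,\mu^2\}$, hence is a scalar multiple of a complex reflection; the same holds for $\rho(\sigma_2^2)$. Since $\langle\sigma_1^2,\sigma_2^2\rangle$ maps onto the index-six subgroup $\overline{\Gamma(2)}=\langle T^2,\tilde T^2\rangle$ of $\Gamma$, it suffices to show that the group generated by the two complex reflections $\bar\rho(\sigma_1^2),\bar\rho(\sigma_2^2)$ is finite, for then $\bar\rho(B_3)$ and hence $\rho(B_3)$ are finite. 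The plan is to identify this reflection group with one of the finite rank-two complex reflection groups in the Shephard--Todd list, or equivalently to produce a finite $\bar\rho(B_3)$-invariant family of lines (an imprimitive, monomial structure) dictated by the eigenvalue pattern. The obstacle is that $\sigma_1^2$ and $\sigma_2^2$ generate a free group, so no relation among the reflections is automatic; the finiteness is forced only by the precise spectrum together with the torsion of the images of $S$ and $ST$, and making this argument uniform in $(\lambda,\mu)$ rather than a case check is the crux. Failing a clean structural proof, I would fall back on Schur's theorem that a finitely generated torsion subgroup of $\mathrm{GL}(3,\C)$ is finite, which reduces the problem to showing that every element of the image has finite order.
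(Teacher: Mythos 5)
Note first that the paper contains no proof of Theorem \ref{RT} to compare against: it is imported from Rowell and Tuba \cite{RT} (with the normal forms of Theorem \ref{TW} from Tuba--Wenzl), so your proposal must be judged on its own completeness. Parts (a)--(d) of your argument are correct and essentially reconstruct the standard reasoning underlying \cite{RT}. Part (a) is immediate; in (b) your reading of non-semisimplicity off the triangular normal forms is right, and your observation that the one repeated-spectrum configuration $\{\lambda,\lambda,-\lambda\}$ that would be semisimple lies in $N_3$ (since $\lambda^2 + \lambda\cdot(-\lambda) = 0$) is exactly the needed point. In (c), the identification $\Gamma/\normcl{T^p} \cong \Delta(2,3,p)$ with $a = S$, $b = ST$, $ab = T$, finite precisely for $p \leq 5$, is the classical von Dyck argument, and your lift of finiteness from $\mathrm{PGL}$ back to $\mathrm{GL}$ is sound: the scalar kernel is finitely generated (Schreier, finite index in a two-generated group), abelian, and torsion because determinants of image elements are roots of unity, hence finite. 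In (d), the classification of finite subgroups of $\PSL(2,\C)$, the exclusion of abelian image via the braid relation and irreducibility, the forcing of reflections in the dihedral case (two rotations cannot generate), and the element-order bounds $3,4,5$ in $A_4, S_4, A_5$ together give $p \leq 5$ correctly.

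The genuine gap is (e), as you yourself flag. Your reductions are fine: $\langle \sigma_1^2, \sigma_2^2\rangle$ surjects onto the index-six free subgroup $\overline{\Gamma(2)} = \langle T^2, U^2\rangle$ of $\Gamma$, so it suffices to show the group generated by the two complex reflections $\bar\rho(\sigma_1^2)$, $\bar\rho(\sigma_2^2)$ is finite. But nothing in the proposal bounds the order of any word in these two reflections, and finiteness of a group generated by two finite-order reflections genuinely depends on the spectral data of their product --- compare two reflections in $\mathrm{O}(2)$, which generate a finite dihedral group only when the angle between their axes is a rational multiple of $\pi$. Invoking the Shephard--Todd list is circular, since that list classifies groups already known to be finite, and the fallback via Schur's theorem merely restates the problem: for a finitely generated linear group, proving every element is torsion is exactly as hard as proving finiteness. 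What is missing is the representation-specific computation exploiting the spectrum $\{\pm\lambda, \mu\}$ --- pinning down, from the braid relation and the normal form of Theorem \ref{TW}, enough relations (e.g.\ that specific words act as scalars, or that a finite invariant line configuration exists) to exhibit the projective image inside a known finite group. That computation is precisely the main technical content of \cite{RT} for this spectral family, and the proposal identifies where it must go but does not supply it.
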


\section{Main results}

This section is devoted to proving our main results. Recall that $Z(B_3)$ is the cyclic group generated by $(\sigma_1\sigma_2)^3$ and we have a surjection $\pi: B_3 \to \Gamma$ whose kernel is $Z(B_3)$. Explicitly, we can take $\pi(\sigma_1) = T$ and $\pi(\sigma_2) = U^{-1}$. We first establish results for $\Gamma$ that handles most of the work for our main theorem. Let us begin in dimension two.

\begin{proposition}{\label{maingamma2}}Suppose $\rho: \Gamma \to \mathrm{GL}(2, \C)$ is irreducible and has finite image and let $N = |\rho(T)|$. Then $\ker \rho$ is a congruence subgroup of level $N$. 
\end{proposition}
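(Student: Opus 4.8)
The plan is to reduce the claim to a finite verification based on Hsu's normal generating set for $\Gamma(N)$, using the Tuba--Wenzl normal form and the Rowell--Tuba finiteness criterion to pin down the relevant eigenvalue data.

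First I would dispose of the level statement and isolate the real content. Since $H := \rho(\Gamma)$ is finite, Corollary \ref{maincorollary} applied to $\rho\colon\Gamma\to H$ (with $m = |\rho(T)| = N$) says that $\ker\rho$ is a congruence subgroup if and only if $\Gamma(N)\subseteq\ker\rho$. Granting that containment, the level is forced: Proposition \ref{keyprop} gives $\glevel(\ker\rho) = |\rho(T)| = N$, and the Fricke--Wohlfahrt theorem (Theorem \ref{fricke}) identifies level with geometric level for congruence subgroups, so $\level(\ker\rho) = N$. Hence the entire proposition reduces to establishing the single inclusion $\Gamma(N)\subseteq\ker\rho$.

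To prove that inclusion I would pass to generators. As $\ker\rho$ is normal in $\Gamma$ and $\Gamma(N) = \langle\langle G_N\rangle\rangle$ by Proposition \ref{Hsu}, we have $\Gamma(N)\subseteq\ker\rho$ precisely when $\rho(g) = I$ for every $g$ in the finite set $G_N$. Each $g$ is an explicit word in $T$ and $U$, so I need explicit matrices for $\rho(T)$ and $\rho(U)$. These are supplied by Theorem \ref{TW}(ii): the pullback $\tilde\rho := \rho\circ\pi\colon B_3\to\mathrm{GL}(2,\C)$ is irreducible (it has the same image, hence the same invariant subspaces, as $\rho$), so it is conjugate to the displayed normal form, giving $\rho(T) = \tilde\rho(\sigma_1)$ and, since $\pi(\sigma_2) = U^{-1}$, $\rho(U) = \tilde\rho(\sigma_2)^{-1}$, both written in terms of the eigenvalues $\lambda_1,\lambda_2$ of $\rho(T)$.

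Finally I would constrain the eigenvalues and run the check. Finiteness of the image forces, via Theorem \ref{RT}(a),(b),(d), that $\lambda_1,\lambda_2$ are distinct roots of unity with $2\le\po(\tilde\rho(\sigma_1))\le 5$, while factoring through $\pi$ imposes $(\lambda_1\lambda_2)^3 = -1$ by Theorem \ref{TW}(ii). Together these leave only finitely many $S_2$-orbits of pairs $\{\lambda_1,\lambda_2\}$, and for each the value $N = |\rho(T)| = \operatorname{lcm}(\mathrm{ord}\,\lambda_1,\mathrm{ord}\,\lambda_2)$ (the matrix being diagonalizable with distinct eigenvalues) is determined. The relation $\rho(T^N) = I$ then holds by the very definition of $N$, so the substantive task is to verify that the remaining generators listed in Proposition \ref{Hsu} also map to the identity. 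I expect this to be the main obstacle: one must correctly enumerate the admissible eigenvalue pairs, sort each resulting $N$ into the appropriate case of the Hsu presentation (odd, a power of two, or even-but-not-a-power-of-two), and evaluate $\rho$ on the generating words, which are longest and least transparent in the mixed case. The redeeming feature is that Theorem \ref{RT} confines us to a short list of small projective orders, so only a few distinct values of $N$ occur and each verification, though tedious, is a finite matrix computation.
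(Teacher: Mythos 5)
Your proposal follows the paper's proof of Proposition \ref{maingamma2} essentially verbatim in structure: the same reduction via Proposition \ref{keyprop}, Theorem \ref{fricke}, and Corollary \ref{maincorollary} to the single inclusion $\Gamma(N)\subseteq\ker\rho$, the same passage to Hsu's normal generating set $G_N$ from Proposition \ref{Hsu}, and the same use of Theorem \ref{TW}(ii) and Theorem \ref{RT} to confine the eigenvalue pairs to finitely many cases with $2\le\po(\rho(T))\le 5$ and $(\lambda_1\lambda_2)^3=-1$. The only difference is that the paper's proof consists almost entirely of actually executing the case-by-case evaluations of $\rho$ on the words in $G_N$ for $r=2,3,4,5$ (sorted into the odd, power-of-two, and mixed cases of the Hsu presentation), which you correctly reduce to but leave unperformed.
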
 

\begin{proof} It is clear from the definition that the geometric level of $\ker \rho$ is $N$. Therefore, by \ref{maincorollary}, we are done if we can show that $\ker \rho$ is a congruence subgroup. To this end, we will show that $\ker \rho$ contains the generating set of $G_N$ given in Proposition \ref{Hsu}. 
Denote by $\lambda_1$ and $\lambda_2$ the eigenvalues of $\rho(T)$. Since the image of $\rho$ is finite, the projective order $r$ of $\rho(T)$ must be finite. Then Theorem \ref{RT} implies that we can write $\lambda_2 = e^{2\pi i j /r} \lambda_1$ where  $ 2\leq r \leq 5$ and $ j \in \ints_r^\times$. Define $\tilde{\rho} = \rho \circ \pi$. This is a representation of $B_3$ that factors through $\pi$ and thus $(\sigma_1\sigma_2)^3$ is in the kernel of this map.  We computed earlier that $(\sigma_1 \sigma_2)^3$ acts by $-(\lambda_1\lambda_2)^3$ under $\rho$ and hence $\lambda_1$ satisfies
\begin{equation}{\label{key2}}
\lambda_1^6 + e^{-6 \pi i j/r} = 0
\end{equation} 
since $(\sigma_1\sigma_2)^3$ is in the kernel of $\tilde{\rho}$. This polynomial equation determines the possibilities for $\lambda_1$ and hence for $\rho$ by Theorem \ref{TW}.  Each allowable choice of $r$ and $j$ provides six representations $\rho_{r,j,\lambda}$ corresponding to the six solutions $\lambda$ to (\ref{key2}) and $\rho$ must be equivalent to $\rho_{r,j,\lambda}$ for some $r$ and $j$ and solution $\lambda$ to (\ref{key2}). To proceed, we show the result holds for each $\rho_{r,j,\lambda}$. For the rest of the proof let us write $X = \rho(T)$ and $Y = \rho(U)$. The key observation we use repeatedly is that $X^r = \lambda^r I$ since $\po(X) = r$ and $X$ has $\lambda$ as an eigenvalue. Also $Y^r = \overline{\lambda}^r I$ and by \cite{RT} there is a symmetry $\rho_{r,j, \lambda} = \rho_{r,-j, e^{2\pi i j/r} \lambda}$ which reduces the number of cases to consider.

{\bf{{\underline{$r=2$:}}}} The only option for $\xi_2$ is -1, so $\spec(\rho_{2,1,\lambda}(T)) = \Set{\lambda, -\lambda}$ where $\lambda^6  = 1 $. When $\lambda = 1$ or $-1$ then $N=2$ and $X^2 = I$. Here $P_2 = T^{20} U T^{-4} U^{-1}$ and so $\rho_{2,1,\lambda}(P_2) = X^{20} Y X^{-4} Y^{-1} = Y Y^{-1} = I$. In turn, 
\begin{align*}
\rho_{2,1,\lambda}(T^2) &= X^2 = I\\
\rho_{2,1,\lambda}(P_2U^5TU^{-1}T)^3 &= (Y^5 X Y^{-1} X)^3 = (Y^{-1}X)^6 = (\rho_{2,1,\lambda}(U^{-1}T)^3)^2 = I 
\end{align*}
which shows $\Gamma(2) \subseteq \ker \rho_{2,1,\lambda}$. 

Now, $\lambda= e^{2 \pi i /6}$ or $e^{4 \pi i /3}$ implies $N=6$ and $X^2 = e^{2\pi i /3}I$. Here $c=4$ and $d=3$ so that 
\begin{align*}
\rho_{2,1,\lambda}(x)  &=  X^4 = e^{4 \pi i 3} I   \quad   & \rho_{2,1,\lambda}(z)  = X^3 = e^{2 \pi i /3} X \\
\rho_{2,1,\lambda}(y) &= Y^4 = e^{2 \pi i /3}I  \quad    & \rho_{2,1,\lambda}(w)  = Y^3 = e^{4 \pi i /3} Y 
\end{align*}
and $\rho_{2,1,\lambda}(p_6) = \rho_{2,1,\lambda}(z^{20} w z^{-4}w^{-1}) = I$. We see $T^6 \in \ker \rho_{2,1,\lambda}$ and since $\rho_{2,1,\lambda}(x)$ is a scalar matrix, $[x,w]$ is also in the kernel of $\rho_{2,1,\lambda}$. We have
\begin{equation*}
\rho_{2,1,\lambda}(xy^{-1}x)^4 = (e^{4 \pi i /3} I) ^{12} = I
\end{equation*}
so that $(xy^{\inv}x)^4 \in \ker \rho_{2,1,\lambda}$.  Also
\begin{align*}
\rho_{2,1,\lambda}(xy^{\inv}x)^2 &= (e^{4 \pi i /3}I)^6 = I \\
\rho_{2,1,\lambda}(x^{\inv}y)^3 &= (e^{4 \pi i /3}I)^3 = I \\
\rho_{2,1,\lambda}(x^2 y^{-2})^3 &= (e^{4 \pi i/3}I)^3 = I.
\end{align*}
Further, 
\begin{align*}
\rho_{2,1,\lambda}(zw^{\inv}z)^2  &= ((e^{2 \pi i /3})^3 XY^{\inv} X)^2 = \rho((TU^{\inv}T)^2) = I \\
\rho_{2,1,\lambda}(p_6 w^5 zw^{\inv}z)^3  & = (XX^{\inv}YXY^{\inv}X)^3 =  (XY^{\inv})^3 = I
\end{align*}
since $U^{\inv} = T^{\inv} U T U^{\inv} X$. Lastly, $\rho_{2,1,\lambda}(w^{25}) = \rho_{2,1,\lambda}(w)$, and so we see that $\Gamma(6) \subseteq \ker \rho_{2,1,\lambda}$. 

If $\lambda = e^{2\pi i /3}$ or $e^{10 \pi i /6}$ then $N=6$ and $X^2 = e^{4 \pi i /3} I$. Again $c=4$ and $d=3$ so that 
\begin{align*}
\rho_{2,1,\lambda}(x)  &=  X^4 = e^{2 \pi i 3} I  \quad  & \rho_{2,1,\lambda}(z)  = X^3 = e^{4 \pi i /3} X \\
\rho_{2,1,\lambda}(y) & = Y^4 = e^{4 \pi i /3}I       \quad &    \rho_{2,1,\lambda}(w)  = Y^3 = e^{2 \pi i /3} Y 
\end{align*}
and $\rho_{2,1,\lambda}(p_6) = \rho_{2,1,\lambda}(z^{20}  w z^{-4} w^{-1}) = I$. We see $T^6 \in \ker \rho_{2,1,\lambda}$ and since $\rho_{2,1,\lambda}(x)$ is a scalar matrix, $[x,w]$ is also in the kernel of $\rho_{2,1,\lambda}$. We have
\begin{equation*}
\rho_{2,1,\lambda}(xy^{-1}x)^4 = (e^{2 \pi i /3} I) ^{12} = I
\end{equation*}
so that $(xy^{\inv}x)^4 \in \ker \rho_{2,1,\lambda}$.  Also
\begin{align*}
\rho_{2,1,\lambda}(xy^{\inv}x)^2 &= (e^{2 \pi i /3}I)^6 = I \\
\rho_{2,1,\lambda}(x^{\inv}y)^3 &= (e^{2 \pi i /3}I)^3 = I \\
\rho_{2,1,\lambda}(x^2 y^{-2})^3 &= (e^{2 \pi i/3}I)^3 = I.
\end{align*}
Further, 
\begin{align*}
\rho_{2,1,\lambda}(zw^{\inv}z)^2  &= ((e^{4 \pi i /3})^3 XY^{\inv} X)^2 = \rho((TU^{\inv}T)^2) = I \\
\rho_{2,1,\lambda}(p_6 w^5 zw^{\inv}z)^3  & = (XX^{\inv}YXY^{\inv}X)^3 =  (XY^{\inv})^3 = I
\end{align*}
since $U^{\inv} = T^{\inv} U T U^{\inv} X$. Lastly, $\rho_{2,1,\lambda}(w^{25}) = \rho_{2,1,\lambda}(w)$, and so we see that $\Gamma(6) \subseteq \ker \rho_{2,1,\lambda}$.

{\bf{{\underline{$r=3$}:}}} We can have either $\xi_3 = e^{2\pi i /3}$ or $e^{4 \pi i /3}$ but by the symmetry mentioned above we can take $\xi_3 =e^{2 \pi i /3}$. If $\lambda = e^{\pi i /6}, \, e^{5 \pi i /6}, \,$ or $e^{ 9 \pi i/6}$ then $N=12$ and $X^3 =iI$. Here $c=4$ and $d=9$ so that 
\begin{align*}
\rho_{3,1,\lambda}(x)  &=  X^4 = iX   \quad &\rho_{3,1,\lambda}(z)  = X^9 = -iI \\
\rho_{3,1,\lambda}(y) &= Y^4 = -iX  \quad  &\rho_{3,1,\lambda}(w) = Y^9 = iI 
\end{align*}
and $\rho_{3,1,\lambda}(p_{12}) = \rho_{3,1,\lambda}(z^{20} w z^{-4} w^{-1}) = I$. We see $T^{12} \in \ker \rho_{3,1,\lambda}$. Since $\rho_{3,1,\lambda}(w)$ is a scalar matrix, $[x,w]$ is also in the kernel of $\rho_{3,1,\lambda}$. We have
\begin{equation*}
\rho_{3,1,\lambda}(xy^{-1}x)^4 = (-iXY^{\inv}X)^4 = I
\end{equation*}
so that $(xy^{\inv}x)^4 \in \ker \rho_{3,1,\lambda}$.  Also
\begin{align*}
\rho_{3,1,\lambda}(xy^{\inv}x)^2 &= (-iXY^{\inv}X)^2 = -I \\
\rho_{3,1,\lambda}(x^{\inv}y)^3 &= (-X^{\inv}Y)^3 = -I \\
\rho_{3,1,\lambda}(x^2 y^{-2})^3 &= (X^2Y^{-2})^3 = (X^2Y^{10})^3 = -i[(Y^{\inv} X^{10})^3]^{\inv} = - [(Y^{\inv}X)^3]^{\inv} -I. 
\end{align*}
Further, 
\begin{align*}
\rho_{3,1,\lambda}(zw^{\inv}z)^2  &= - I \\
\rho_{3,1,\lambda}(p_{12} w^5 zw^{\inv}z)^3  &= \rho_{3,1,\lambda}(w  z w^{\inv} z) = -I.
\end{align*}
Lastly, $\rho_{3,1,\lambda}(w)^{25} = \rho_{3,1,\lambda}(w)$, and so we see that $\Gamma(12) \subseteq \ker \rho_{3,1,\lambda}$.

If $\lambda_1 = e^{3 \pi i /6}, \, e^{7 \pi i /6}, \,$ or $e^{11 \pi i /6}$ then $N=12$ and $X^3 =-iI$. Here $c=4$ and $d=9$ so that 
\begin{align*}
\rho_{3,1,\lambda}(x)  &=  X^4 = -iX  \quad & \rho_{3,1,\lambda}(z) & = X^9 = iI \\
\rho_{3,1,\lambda}(y) &= Y^4 = iX \quad & \rho_{3,1,\lambda}(w) & = Y^9 = -iI 
\end{align*}
and $\rho_{3,1,\lambda}(p_{12}) = \rho_{3,1,\lambda}(z^{20}  w z^{-4}  w^{-1}) = I$. We see $T^{12} \in \ker \rho_{3,1,\lambda}$. Since $\rho_{3,1,\lambda}(w)$ is a scalar matrix, $[x,w]$ is also in the kernel of $\rho_{3,1,\lambda}$. We have
\begin{equation*}
\rho_{3,1,\lambda}(xy^{-1}x)^4 = (iXY^{\inv}X)^4 = I
\end{equation*}
so that $(xy^{\inv}x)^4 \in \ker \rho_{3,1,\lambda}$.  Also
\begin{align*}
\rho_{3,1,\lambda}(xy^{\inv}x)^2 &= (iXY^{\inv}X)^2 = -I \\
\rho_{3,1,\lambda}(x^{\inv}y)^3 &= (-X^{\inv}Y)^3 = -I \\
\rho_{3,1,\lambda}(x^2 y^{-2})^3 &= (X^2Y^{-2})^3 = (X^2Y^{10})^3 = i[(Y^{\inv} X^{10})^3]^{\inv} = - [(Y^{\inv}X)^3]^{\inv} -I. 
\end{align*}
Further, 
\begin{align*}
\rho_{3,1,\lambda}(zw^{\inv}z)^2  &= - I \\
\rho_{3,1,\lambda}(p_{12} w^5 zw^{\inv}z)^3  &= \rho_{3,1,\lambda}(w z w^{\inv} z) = -I.
\end{align*}
Lastly, $\rho_{3,1,\lambda}(w)^{25} = \rho_{3,1,\lambda}(w)$, and so we see that $\Gamma(12) \subseteq \ker \rho_{3,1,\lambda}$.

{\bf{{\underline{$r=4$}:}}} We can take $\xi_4 = i$ so that $\lambda_1^6 = -i$. When $\lambda = e^{3 \pi i /12}$ or $e^{15 \pi i /12}$ we have $N=8$ and $X^4 = -I$. Here $P_8 = T^{20} U^5 T^{-4} U^{-1}$ and so $\rho_{4,1,\lambda}(P_8) = X^{20} Y^{5} X^{-4} Y^{-1} =X^4 Y^5 X^{-4} Y^{-1} =  Y^4 = -I$. In turn, 
\begin{align*}
\rho_{4,1,\lambda}(T^2) &= X^2 = I\\
\rho_{4,1,\lambda}(P_8U^5TU^{-1}T)^3 &= ( -Y^5 X Y^{\inv} X)^3 = (YXY^{\inv}X)^3 = (XY^{\inv})^3 = I
\end{align*}
which shows $\Gamma(8) \subseteq \ker \rho_{4,1,\lambda}$. If $\lambda_1 = e^{7 \pi i /12}$ or $e^{19 \pi i /12}$ then $N=24$ and $X^4 = e^{ \pi i /3 }I$. Here $c=16$ and $d=9$ so that 
\begin{align*}
\rho_{4,1,\lambda}(x)  &=  X^16 = e^{4\pi i /3}I  \quad & \rho_{4,1,\lambda}(z)  = X^9 = e^{2 \pi i /3} X \\
\rho_{4,1,\lambda}(y) &= Y^16 = e^{2 \pi i 3} I  \quad & \rho_{4,1,\lambda}(w) & = Y^9 = e^{4 \pi i 3} Y
\end{align*}
and 
\begin{equation*}
\rho_{4,1,\lambda}(p_{24}) = \rho_{4,1,\lambda}(z^{20}  w^5 z^{-4} w^{-1}) = \rho_{4,1,\lambda}(z^4 w^5 z^{-4} w^{-1}) = \rho(w)^4 = e^{4 \pi i /3} Y^4 = -I.
\end{equation*}

We see $T^{24} \in \ker \rho_{4,1,\lambda}$. Since $\rho_{4,1,\lambda}(x)$ is a scalar matrix, $[x,w]$ is also in the kernel of $\rho_{4,1,\lambda}$. We have
\begin{equation*}
\rho_{4,1,\lambda}(xy^{-1}x) = (e^{4 \pi i /3} I)^3 = I
\end{equation*}
so that $(xy^{\inv}x)^4 \in \ker \rho_{4,1,\lambda}$.  Also
\begin{align*}
\rho_{4,1,\lambda}(xy^{\inv}x)^2 &= I \\
\rho_{4,1,\lambda}(x^{\inv}y)^3 &= (e^{4\pi i /3}I)^3 = I \\
\rho_{4,1,\lambda}(x^2 y^{-2})^3 &= (e^{4 \pi i/3} I)^3 = I .
\end{align*}
Further, 
\begin{equation*}
\rho_{4,1,\lambda}(zw^{\inv}z) = XY^{\inv}X
\end{equation*}
so that
\begin{equation*}
\rho_{4,1,\lambda}(zw^{\inv}z)^2  = I 
\end{equation*} and
\begin{equation*} 
\rho_{4,1,\lambda}(p_{24} w^5 zw^{\inv}z)^3  =  (\rho_{4,1,\lambda}(w z w^{\inv} z))^3 = (e^{4 \pi i /3}YXY^{\inv}X)^3 = (XY^{\inv})^3 = I.
\end{equation*}
Lastly, $\rho_{4,1,\lambda}(w)^{25} = \rho_{4,1,\lambda}(w)$, and so we see that $\Gamma(24) \subseteq \ker \rho_{4,1,\lambda}$. If $\lambda_1 = e^{11 \pi i /12}$ or $e^{23 \pi i /12}$ then $N=24$ and $X^4 = e^{ 5 \pi i /3 }I$. Again $c=16$ and $d=9$ so that 
\begin{align*}
\rho_{4,1,\lambda}(x)  &=  X^16 = e^{2\pi i /3}I  \quad & \rho_{4,1,\lambda}(z) = X^9 = e^{4 \pi i /3} X \\
\rho_{4,1,\lambda}(y) &= Y^16 = e^{4 \pi i 3} I  \quad & \rho_{4,1,\lambda}(w) = Y^9 = e^{2 \pi i 3} Y
\end{align*}
and 
\begin{equation*}
\rho_{4,1,\lambda}(p_{24}) = \rho_{4,1,\lambda}(z^{20} w^5 z^{-4} w^{-1}) = \rho_{4,1,\lambda}(z^4 w^5 z^{-4}w^{-1}) = \rho_{4,1,\lambda}(w)^4 = e^{2 \pi i /3} Y^4 = -I.
\end{equation*}

We see $T^{24} \in \ker \rho_{4,1,\lambda}$. Since $\rho_{4,1,\lambda}(x)$ is a scalar matrix, $[x,w]$ is also in the kernel of $\rho_{4,1,\lambda}$. We have
\begin{equation*}
\rho_{4,1,\lambda}(xy^{-1}x) = (e^{2 \pi i /3} I)^3 = I
\end{equation*}
so that $(xy^{\inv}x)^4 \in \ker \rho_{4,1,\lambda}$.  Also
\begin{align*}
\rho_{4,1,\lambda}(xy^{\inv}x)^2 &= I \\
\rho_{4,1,\lambda}(x^{\inv}y)^3 &= (e^{2\pi i /3}I)^3 = I \\
\rho_{4,1,\lambda}(x^2 y^{-2})^3 &= (e^{2 \pi i/3} I)^3 = I .
\end{align*}
Further, 
\begin{equation*}
\rho_{4,1,\lambda}(zw^{\inv}z) = XY^{\inv}X
\end{equation*}
so that
\begin{equation*}
\rho_{4,1,\lambda}(zw^{\inv}z)^2  = I 
\end{equation*} and
\begin{equation*} 
\rho_{4,1,\lambda}(p_{24} w^5 zw^{\inv}z)^3  =  (\rho_{4,1,\lambda}(w zw^{\inv} z))^3 = (e^{2 \pi i /3}YXY^{\inv}X)^3 = (XY^{\inv})^3 = I.
\end{equation*}
Lastly, $\rho_{4,1,\lambda}(w)^{25} = \rho_{4,1,\lambda}(w)$, and so we see that $\Gamma(24) \subseteq \ker \rho$.

{\bf{{\underline{$r=5$}:}}}  There are two cases to consider for $r=5$. We can have either $\xi_5=e^{2 \pi i /5}$ or $e^{4\pi i /5}$. Let us take $\xi_5 = e^{2 \pi i/5}$. Then $\lambda^6 = -e^{4 \pi i /5}$. If $\lambda = e^{3 \pi i /10}$ then $N=20$ and $X^5 = - i I$. Here $c=16$ and $d=5$ so that 
\begin{align*}
\rho_{5,1,\lambda}(x)  &=  X^{16} = iX  \quad &\rho_{5,1,\lambda}(z)  = X^5 = -iI \\
\rho_{5,1,\lambda}(y) &= Y^{16} = -iY \quad & \rho_{5,1,\lambda}(w)  = Y^5 = iI
\end{align*}
and $\rho_{5,1,\lambda}(p_{20}) = \rho_{5,1,\lambda}(z^{20} w z^{-4} w^{-1}) = I$. We see $T^{20} \in \ker \rho_{5,1,\lambda}$. Since $\rho_{5,1,\lambda}(w)$ is a scalar matrix, $[x,w]$ is also in the kernel of $\rho_{5,1,\lambda}$. We have
\begin{equation*}
\rho_{5,1,\lambda}(xy^{-1}x)^4 = (-iXY^{\inv}X)^4 = I
\end{equation*}
so that $(xy^{\inv}x)^4 \in \ker \rho_{5,1,\lambda}$.  Also
\begin{align*}
\rho_{5,1,\lambda}(xy^{\inv}x)^2 &= (-iXY^{\inv}X)^2 = -I \\
\rho_{5,1,\lambda}(x^{\inv}y)^3 &= (-X^{\inv}Y)^3 = -I \\
\rho_{5,1,\lambda}(x^3 y^{-2})^3 &=  (iX^3Y^{-2})^3 = (iX^3 Y^{18})^3 = (X^3Y^3)^3 = -I. 
\end{align*}
Further, 
\begin{align*}
\rho_{5,1,\lambda}(zw^{\inv}z)^2  &=  -I \\
\rho_{5,1,\lambda}(p_{20} w^5 zw^{\inv}z)^3  &= (\rho_{5,1,\lambda}(w z w^{\inv} z))^3 = \rho_{5,1,\lambda}(z)^6 = -I
\end{align*}
Lastly, $\rho_{5,1,\lambda}(w)^{25} = \rho_{5,1,\lambda}(w)$, and so we see that $\Gamma(20) \subseteq \ker \rho_{5,1,\lambda}$.

If $\lambda = e^{19 \pi i /30}$ then $N=60$ and $X^5 = e^{7\pi i /6}  I$. Here $c=16$ and $d=5$ so that 
\begin{align*}
\rho_{5,1,\lambda}(x)  &=  X^{16} = -iX  \quad & \rho_{5,1,\lambda}(z)  = X^5 = e^{7 \pi i /6}I \\
\rho_{5,1,\lambda}(y) &= Y^{16} = iY \quad & \rho_{5,1,\lambda}(w)  = Y^5 = e^{5\pi i /6}I
\end{align*}
and $\rho_{5,1,\lambda}(p_{60}) = \rho_{5,1,\lambda}(z^{20} w z^{-4}  w^{-1}) = \rho_{5,1,\lambda}(z)^4 = e^{2 \pi i /3} I$. We see $T^{60} \in \ker \rho_{5,1,\lambda}$. Since $\rho_{5,1,\lambda}(w)$ is a scalar matrix, $[x,w]$ is also in the kernel of $\rho_{5,1,\lambda}$. We have
\begin{equation*}
\rho_{5,1,\lambda}(xy^{-1}x)^4 = (iXY^{\inv}X)^4 = I
\end{equation*}
so that $(xy^{\inv}x)^4 \in \ker \rho_{5,1,\lambda}$.  Also
\begin{align*}
\rho_{5,1,\lambda}(xy^{\inv}x)^2 &= (iXY^{\inv}X)^2 = -I \\
\rho_{5,1,\lambda}(x^{\inv}y)^3 &= (-X^{\inv}Y)^3 = -I \\
\rho_{5,1,\lambda}(x^8 y^{-2})^3 &=  (-X^8Y^{-2})^3 = (e^{2\pi i /3}X^3 Y^3)^3 = (X^3Y^3)^3 = -I
\end{align*}
Further, 
\begin{align*}
\rho_{5,1,\lambda}(zw^{\inv}z)^2  &=  -I \\
\rho_{5,1,\lambda}(p_{60} w^5 zw^{\inv}z)^3  &= (e^{2 \pi i /3} \rho_{5,1,\lambda}(w^5  z w^{\inv}  z))^3 = \rho(w^{12} z^6) = -I
\end{align*}
Lastly, $\rho_{5,1,\lambda}(w)^{25} = \rho_{5,1,\lambda}(w)$, and so we see that $\Gamma(60) \subseteq \ker \rho_{5,1,\lambda}$. 

If $\lambda= e^{29 \pi i /30}$ then $N=60$ and $X^5 = e^{5\pi i /6}  I$. Here $c=16$ and $d=5$ so that 
\begin{align*}
\rho_{5,1,\lambda}(x)  &=  X^{16} = iX  \quad & \rho_{5,1,\lambda}(z)  = X^5 = e^{5 \pi i /6}I \\
\rho_{5,1,\lambda}(y) &= Y^{16} = -iY \quad & \rho_{5,1,\lambda}(w)  = Y^5 = e^{7\pi i /6}I
\end{align*}
and $\rho_{5,1,\lambda}(p_{60}) = \rho_{5,1,\lambda}(z^{20} w z^{-4} w^{-1}) = \rho_{5,1,\lambda}(z)^4 = e^{4 \pi i /3} I$. We see $T^{60} \in \ker \rho_{5,1,\lambda}$. Since $\rho_{5,1,\lambda}(w)$ is a scalar matrix, $[x,w]$ is also in the kernel of $\rho_{5,1,\lambda}$. We have
\begin{equation*}
\rho_{5,1,\lambda}(xy^{-1}x)^4 = (-iXY^{\inv}X)^4 = I
\end{equation*}
so that $(xy^{\inv}x)^4 \in \ker \rho_{5,1,\lambda}$.  Also
\begin{align*}
\rho_{5,1,\lambda}(xy^{\inv}x)^2 &= (-iXY^{\inv}X)^2 = -I \\
\rho_{5,1,\lambda}(x^{\inv}y)^3 &= (-X^{\inv}Y)^3 = -I \\
\rho_{5,1,\lambda}(x^8 y^{-2})^3 &=  (-X^8Y^{-2})^3 = (e^{2\pi i /3}X^3 Y^3)^3 = (X^3Y^3)^3 = -I
\end{align*}
Further, 
\begin{align*}
\rho_{5,1,\lambda}(zw^{\inv}z)^2  &=  -I \\
\rho_{5,1,\lambda}(p_{60} w^5 zw^{\inv}z)^3  &= (e^{4 \pi i /3} \rho_{5,1,\lambda}(w z w^{\inv} z))^3 = \rho_{5,1,\lambda}(w^{12}z^6) =  -I
\end{align*}
Lastly, $\rho_{5,1,\lambda}(w)^{25} = \rho_{5,1,\lambda}(w)$, and so we see that $\Gamma(60) \subseteq \ker \rho_{5,1,\lambda}$.

If $\lambda =  e^{13 \pi i /10}$ then $N=20$ and $X^5 =  i I$. Here $c=16$ and $d=5$ so that 
\begin{align*}
\rho_{5,1,\lambda}(x)  &=  X^{16} = -iX  \quad & \rho_{5,1,\lambda}(z) = X^5 = iI \\
\rho_{5,1,\lambda}(y) &= Y^{16} = iY \quad & \rho_{5,1,\lambda}(w)  = Y^5 = -iI
\end{align*}
and $\rho_{5,1,\lambda}(p_{20}) = \rho_{5,1,\lambda}(z^{20}  w z^{-4}  w^{-1}) = I$. We see $T^{20} \in \ker \rho_{5,1,\lambda}$. Since $\rho_{5,1,\lambda}(w)$ is a scalar matrix, $[x,w]$ is also in the kernel of $\rho_{5,1,\lambda}$. We have
\begin{equation*}
\rho_{5,1,\lambda}(xy^{-1}x)^4 = (iXY^{\inv}X)^4 = I
\end{equation*}
so that $(xy^{\inv}x)^4 \in \ker \rho_{5,1,\lambda}$.  Also
\begin{align*}
\rho_{5,1,\lambda}(xy^{\inv}x)^2 &= (iXY^{\inv}X)^2 = -I \\
\rho_{5,1,\lambda}(x^{\inv}y)^3 &= (-X^{\inv}Y)^3 = -I \\
\rho_{5,1,\lambda}(x^3 y^{-2})^3 &=  (-iX^3Y^{-2})^3 = (-iX^3 Y^{18})^3 = (X^3Y^3)^3 = -I. 
\end{align*}
Further, 
\begin{align*}
\rho_{5,1,\lambda}(zw^{\inv}z)^2  &=  -I \\
\rho_{5,1,\lambda}(p_{20} w^5 zw^{\inv}z)^3  &= (\rho_{5,1,\lambda}(w z w^{\inv} z))^3 = \rho_{5,1,\lambda}(z)^6 = -I
\end{align*}
Lastly, $\rho_{5,1,\lambda}(w)^{25} = \rho_{5,1,\lambda}(w)$, and so we see that $\Gamma(20) \subseteq \ker \rho_{5,1,\lambda}$.

If $\lambda= e^{49 \pi i /30}$ then $N=60$ and $X^5 = e^{\pi i /6}  I$. Here $c=16$ and $d=5$ so that 
\begin{align*}
\rho_{5,1,\lambda}(x)  &=  X^{16} = iX  \quad &  \rho_{5,1,\lambda}(z)  = X^5 = e^{ \pi i /6}I \\
\rho_{5,1,\lambda}(y) &= Y^{16} = -iY \quad & \rho_{5,1,\lambda}(w)  = Y^5 = e^{11\pi i /6}I
\end{align*}
and $\rho_{5,1,\lambda}(p_{60}) = \rho_{5,1,\lambda}(z^{20} w z^{-4} w^{-1}) = \rho_{5,1,\lambda}(z)^4 = e^{2 \pi i /3} I$. We see $T^{60} \in \ker \rho_{5,1,\lambda}$. Since $\rho_{5,1,\lambda}(w)$ is a scalar matrix, $[x,w]$ is also in the kernel of $\rho_{5,1,\lambda}$. We have
\begin{equation*}
\rho_{5,1,\lambda}(xy^{-1}x)^4 = (-iXY^{\inv}X)^4 = I
\end{equation*}
so that $(xy^{\inv}x)^4 \in \ker \rho_{5,1,\lambda}$.  Also
\begin{align*}
\rho_{5,1,\lambda}(xy^{\inv}x)^2 &= (-iXY^{\inv}X)^2 = -I \\
\rho_{5,1,\lambda}(x^{\inv}y)^3 &= (-X^{\inv}Y)^3 = -I \\
\rho_{5,1,\lambda}(x^8 y^{-2})^3 &=  (-X^8Y^{-2})^3 = (e^{2\pi i /3}X^3 Y^3)^3 = (X^3Y^3)^3 = -I
\end{align*}
Further, 
\begin{align*}
\rho_{5,1,\lambda}(zw^{\inv}z)^2  &=  -I \\
\rho_{5,1,\lambda}(p_{60} w^5 zw^{\inv}z)^3  &= (e^{2 \pi i /3} \rho_{5,1,\lambda}(w^5 z w^{\inv}z))^3 = \rho_{5,1,\lambda}(w^{12} z^6) = -I
\end{align*}
Lastly, $\rho_{5,1,\lambda}(w)^{25} = \rho_{5,1,\lambda}(w)$, and so we see that $\Gamma(60) \subseteq \ker \rho_{5,1,\lambda}$. 

If $\lambda= e^{59 \pi i /30}$ then $N=60$ and $X^5 = e^{11\pi i /6}  I$. Here $c=16$ and $d=5$ so that 
\begin{align*}
\rho_{5,1,\lambda}(x)  &=  X^{16} =- iX  \quad & \rho_{5,1,\lambda}(z)  = X^5 = e^{11 \pi i /6}I \\
\rho_{5,1,\lambda}(y) &= Y^{16} = iY \quad & \rho_{5,1,\lambda}(w)  = Y^5 = e^{\pi i /6}I
\end{align*}
and $\rho_{5,1,\lambda}(p_{60}) = \rho_{5,1,\lambda}(z^{20} w z^{-4}  w^{-1}) = \rho_{5,1,\lambda}(z)^4 = e^{4 \pi i /3} I$. We see $T^{60} \in \ker \rho_{5,1,\lambda}$. Since $\rho_{5,1,\lambda}(w)$ is a scalar matrix, $[x,w]$ is also in the kernel of $\rho_{5,1,\lambda}$. We have
\begin{equation*}
\rho_{5,1,\lambda}(xy^{-1}x)^4 = (iXY^{\inv}X)^4 = I
\end{equation*}
so that $(xy^{\inv}x)^4 \in \ker \rho$.  Also
\begin{align*}
\rho_{5,1,\lambda}(xy^{\inv}x)^2 &= (iXY^{\inv}X)^2 = -I \\
\rho_{5,1,\lambda}(x^{\inv}y)^3 &= (-X^{\inv}Y)^3 = -I \\
\rho_{5,1,\lambda}(x^8 y^{-2})^3 &=  (-X^8Y^{-2})^3 = (e^{2\pi i /3}X^3 Y^3)^3 = (X^3Y^3)^3 = -I
\end{align*}
Further, 
\begin{align*}
\rho_{5,1,\lambda}(zw^{\inv}z)^2  &=  -I \\
\rho_{5,1,\lambda}(p_{60} w^5 zw^{\inv}z)^3  &= (e^{4 \pi i /3} \rho_{5,1,\lambda}(w^5 z w^{\inv} z))^3 = \rho_{5,1,\lambda}(w^{12}z^6) = -I
\end{align*}
Lastly, $\rho_{5,1,\lambda}(w)^{25} = \rho_{5,1,\lambda}(w)$, and so we see that $\Gamma(60) \subseteq \ker \rho_{5,1,\lambda}$. The case of $\xi_5 = e^{4 \pi i /5}$ is completely analogous.

\end{proof}

We now move on studying the congruence properties of three-dimensional representations.

\begin{proposition}{\label{maingamma3}} Suppose $\rho: \Gamma \to \mathrm{GL}(3,\C)$ is irreducible with finite image and $2 \leq \po(\rho(\sigma_1)) \leq 5$ and let $N = |\rho(T)|$. Then $\ker \rho$ is a congruence subgroup of level $N$. 
\end{proposition}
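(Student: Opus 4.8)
The plan is to repeat the strategy of Proposition \ref{maingamma2}, adapting only the eigenvalue bookkeeping to dimension three. Since $\ker\rho$ is normal in $\Gamma$ and $\rho$ has finite image, Proposition \ref{keyprop} gives $\glevel(\ker\rho) = |\rho(T)| = N$. By Corollary \ref{maincorollary} it therefore suffices to prove $\Gamma(N) \subseteq \ker\rho$, and by Proposition \ref{Hsu} this in turn reduces to checking that each element of the normal generating set $G_N$ maps into $\ker\rho$, where the shape of $G_N$ is dictated by the factorization $N = ek$ into its $2$-part and odd part. Granting this, Theorem \ref{fricke} upgrades the conclusion to $\level(\ker\rho) = \glevel(\ker\rho) = N$.

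Next I would pin down the finite list of eigenvalue configurations to be tested. Writing $\spec(\rho(T)) = \{\lambda_1,\lambda_2,\lambda_3\}$, parts (a) and (b) of Theorem \ref{RT} force the $\lambda_i$ to be three \emph{distinct} roots of unity. Since three distinct roots of unity can never satisfy $\lambda_1^2 = \lambda_2^2 = \lambda_3^2$, the projective order $r := \po(\rho(T))$ is automatically $3$, $4$, or $5$; in particular the case $\po = 2$ allowed by the hypothesis is vacuous in dimension three. Setting $\zeta = e^{2\pi i/r}$, one may write $\{\lambda_1,\lambda_2,\lambda_3\} = \lambda\{1,\zeta^a,\zeta^b\}$ for a root of unity $\lambda$ and exponents $0 < a < b < r$ chosen so that the $\operatorname{lcm}$ of the orders of $\zeta^a$ and $\zeta^b$ equals $r$ (this is automatic for the primes $r = 3, 5$ and requires one odd exponent when $r = 4$). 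The single condition that $\rho$ factor through $\pi$, which by Theorem \ref{TW}(iii) is $(\lambda_1\lambda_2\lambda_3)^2 = 1$, then determines $\lambda$ up to finitely many roots of unity for each admissible exponent pair; combined with the Rowell--Tuba scaling symmetry this leaves a manageable finite list of representations to examine.

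The computation itself mirrors the two-dimensional case. Put $X = \rho(T)$ and $Y = \rho(U)$. The two identities used over and over are that $X^r = \lambda^r I$ is scalar, because $\po(X) = r$ and $X$ is diagonalizable, and that $Y^r = \overline{\lambda}^r I$ is scalar as well, because $\sigma_2$ is conjugate to $\sigma_1$ in $B_3$ and $\pi(\sigma_2) = U^{\inv}$, so that $Y^{\inv}$ shares the eigenvalues of $X$. Substituting the explicit upper/lower triangular matrices of Theorem \ref{TW}(iii) and reducing with the modular relations $(TU^{\inv}T)^2 = (U^{\inv}T)^3 = 1$, I would evaluate each generator of $G_N$ and verify it lands on $I$ or $-I$, both trivial in $\Gamma$. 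Carrying this out for every configuration establishes $\Gamma(N) \subseteq \ker\rho$, which is what remains to prove.

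I expect the main obstacle to be the sheer size and bookkeeping of the case analysis rather than any single conceptual difficulty. Compared with dimension two the number of admissible triples $\{\lambda_1,\lambda_2,\lambda_3\}$ is substantially larger, and, crucially, the matrices $X$ and $Y$ no longer collapse to scalars after a low power: they remain genuine $3\times 3$ matrices until one reaches the $r$-th power, so each generator of $G_N$ must be simplified by combining the scalar identities $X^r = \lambda^r I$ and $Y^r = \overline{\lambda}^r I$ with the braid and modular relations rather than read off directly. The decisive step is to organize this proliferation---exploiting the $S_3$-action on eigenvalue orbits and the scaling invariance of the projective order to identify equivalent cases before computing---so that only a few genuinely distinct verifications remain.
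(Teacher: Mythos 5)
Your reduction is exactly the paper's: Proposition \ref{keyprop} gives $\glevel(\ker\rho)=N$, Corollary \ref{maincorollary} (via Fricke--Wohlfahrt) reduces congruence to $\Gamma(N)\subseteq\ker\rho$, Proposition \ref{Hsu} reduces that to a finite check on the normal generators $G_N$, and the eigenvalue bookkeeping via Theorem \ref{RT} and Theorem \ref{TW}(iii) --- including the observation that $r=2$ is vacuous because the three eigenvalues must be distinct, and the condition $(\lambda_1\lambda_2\lambda_3)^2=1$ pinning $\lambda$ to finitely many values --- matches the paper's setup for the families $\rho_{r,j,k,\lambda}$ line by line. Your normalization $\{\lambda_1,\lambda_2,\lambda_3\}=\lambda\{1,\zeta^a,\zeta^b\}$ with the lcm condition is a slightly more flexible parametrization than the paper's restriction to $j,k\in\ints_r^\times$, but the two are reconciled by re-choosing the base eigenvalue (this is what the paper's phrase ``the only primitive'' spectrum for $r=4$ is doing), so no cases are lost either way.

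There is, however, one genuine error in your success criterion: you say each generator of $G_N$ should be verified to land on ``$I$ or $-I$, both trivial in $\Gamma$.'' This conflates the source with the target. The elements of $G_N$ live in $\Gamma$, where $A\sim -A$, but $\rho$ is an honest linear representation into $\mathrm{GL}(3,\C)$, and $\ker\rho=\{\gamma : \rho(\gamma)=I\}$; a generator mapping to $-I$ would \emph{not} lie in $\ker\rho$, and accepting it would falsely certify $\Gamma(N)\subseteq\ker\rho$. In the paper's three-dimensional computations every full Hsu generator does evaluate to $I$ exactly; values of $-I$ occur only for partial factors (and in the two-dimensional Proposition \ref{maingamma2}, where products such as $(xy^{\inv}x)^2(x^{\inv}y)^3$ have both factors mapping to $-I$ and hence the generator to $I$). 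So your plan survives, but only because the computations happen to come out to $I$ --- the criterion as you stated it is wrong and must be tightened before executing the case check. Relatedly, note that those case-by-case evaluations are not an afterthought: they constitute essentially the entire body of the paper's proof, so your proposal, while following the same route, defers rather than completes the step where all the work lies.
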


\begin{proof} We proceed as in the proof of Proposition \ref{maingamma2}. Let $\spec{\rho(T)} = \Set{\lambda_1, \lambda_2, \lambda_3}$. By Theorem \ref{RT}, we can write $\lambda_2  = e^{2 \pi i j / r}\lambda_1$ and $\lambda_3 = e^{2 \pi i k / r}\lambda_1$ where $2 \leq r \leq 5$ and $j,k \in \ints_r^\times$ distinct. If we define $\tilde{\rho} = \rho \circ \pi$ then this is a representation of $B_3$ that factors through $\pi$ and therefore  
\begin{equation}{\label{key3}}
\lambda_1^6 = e^{-4 \pi i (j+k)/r}. 
\end{equation} 
Each solution to (\ref{key3}) gives rise to a representation $\rho_{r,j,k,\lambda}$ where $\spec(\rho(T)) = \set{ \lambda, e^{2 \pi i j/r}\lambda, e^{2 \pi i k/r}\lambda}$. We finish by showing that the result holds for each of these representations. Again let $\rho(T) = X$ and $\rho(U) = Y$ and note that there are symmetries among the $\rho_{r,j,k,\lambda}$ corresponding to the elements of $S_3$. \\

{\bf{\underline{$r=2$}:}} There are no cases since the eigenvalues cannot be distinct. \\

{\bf{\underline{$r=3$}:}} We must have $\spec \rho(T) = \Set{\lambda_1, e^{2\pi i /3}\lambda_1, e^{4 \pi i /3}\lambda_1}$ where $\lambda_1^6 = 1$. If $\lambda_1 = 1, \,  e^{2 \pi i /3},$ or $e^{4 \pi i /3}$ then $N=3$ so $T^3 \in \ker \rho$.   Also $\rho(U^2T^{-2})^3 = (Y^{\inv}X)^3 = I$ so $\Gamma(3) \subseteq \ker \rho$. If $\lambda_1 = e^{2 \pi i /6}, \, -1,$ or $e^{10 \pi i/6}$ then $N=6$ and $X^3 = -I$. Here $c=4$ and $d=3$ so that 
\begin{align*}
\rho_{3,1,2, \lambda}(x)  &=  X^4 = -X  \quad &\rho_{3,1,2, \lambda}(z)  = X^3 = -I \\
\rho_{3,1,2, \lambda}(y) &= Y^4 = -Y \quad &\rho_{3,1,2, \lambda}(w)  = Y^3 = -I 
\end{align*}
and $\rho_{3,1,2, \lambda}(p_6) = \rho_{3,1,2, \lambda}(z^{20} w z^{-4}  w^{-1}) = I$. We see $T^6 \in \ker \rho_{3,1,2, \lambda}$. Since $\rho_{3,1,2, \lambda}(w)$ is a scalar matrix, $[x,w]$ is also in the kernel of $\rho_{3,1,2, \lambda}$. We have
\begin{equation*}
\rho_{3,1,2, \lambda}(xy^{-1}x)^4 = (-XY^{\inv}X)^4 = I
\end{equation*}
so that $(xy^{\inv}x)^4 \in \ker \rho_{3,1,2,\lambda}$.  Also
\begin{align*}
\rho_{3,1,2, \lambda}(xy^{\inv}x)^2 &= (-XY^{\inv}X)^2 = I\\
\rho_{3,1,2, \lambda}(x^{\inv}y)^3 &= (XY^{\inv})^3 = I \\
\rho_{3,1,2, \lambda}(x^2 y^{-2})^3 &= (X^2Y^{-2})^3 = (Y^{\inv}X)^{-3} = I. 
\end{align*}
Further, 
\begin{align*}
\rho_{3,1,2, \lambda}(zw^{\inv}z)^2  &= I \\
\rho_{3,1,2, \lambda}(p_6 w^5 zw^{\inv}z)^3  &= \rho_{e^{2 \pi i /3},e^{4\pi i 3}, \lambda}(wzw^{\inv}z)^3 = I.
\end{align*}
Lastly, $\rho_{3,1,2, \lambda}(w^{25}) = \rho_{3,1,2, \lambda}(w)$, and so we see that $\Gamma(6) \subseteq \ker \rho_{3,1,2, \lambda}$. \\

{\bf{\underline{$r=4$}:}} The only primitive  $\spec\rho_{4,1,3,\lambda}(T) = \Set{\lambda, i\lambda, -i \lambda}$ where again $\lambda^6 = 1$. When $\lambda = 1$ or $-1$ we have $N=4$. Here $P_4 = T^{20} U T^{-4} U^{-1}$ and so $\rho_{4,1,3,\lambda}(P_4) = X^{20} Y X^{-4} Y^{-1} = I$. In turn, 
\begin{align*}
\rho_{4,1,3,\lambda}(T^4) &= X^4 = I\\
\rho_{4,1,3,\lambda}(P_4 U^5TU^{-1}T)^3 &= ( Y^5 X Y^{\inv} X)^3 = (YXY^{\inv}X)^3 = (XY^{\inv})^3 = I
\end{align*}
which shows $\Gamma(4) \subseteq \ker \rho_{4,1,3,\lambda}$. If $\lambda = e^{2 \pi i /6}$ or $e^{8 \pi i /6}$ then $N=12$ and $X^4 = e^{4 \pi i /3 }I$. Here $c=4$ and $d=9$ so that 
\begin{align*}
\rho_{4,1,3,\lambda}(x)  &=  X^4 = e^{4\pi i /3}I  \quad &\rho_{4,1,3,\lambda}(z)  = X^9 = e^{2 \pi i /3} X \\
\rho_{4,1,3,\lambda}(y) &= Y^4 = e^{2 \pi i 3} I \quad &\rho_{4,1,3,\lambda}(w)  = Y^9 = e^{4 \pi i 3} Y
\end{align*}
and 
\begin{equation*}
\rho_{4,1,3,\lambda}(p_{12}) = \rho_{4,1,3,\lambda}(z^{20}  w z^{-4} w^{-1}) = I.
\end{equation*}

We see $T^{12} \in \ker \rho_{4,1,3,\lambda}$. Since $\rho_{4,1,3,\lambda}(x)$ is a scalar matrix, $[x,w]$ is also in the kernel of $\rho_{4,1,3,\lambda}$. We have
\begin{equation*}
\rho_{4,1,3,\lambda}(xy^{-1}x) = (e^{4 \pi i /3})^3 I = I
\end{equation*}
so that $(xy^{\inv}x)^4 \in \ker \rho_{4,1,3,\lambda}$.  Also
\begin{align*}
\rho_{4,1,3,\lambda}(xy^{\inv}x)^2 &=  I \\
\rho_{4,1,3,\lambda}(x^{\inv}y)^3 &= (e^{2\pi i /3}I)^3 = I \\
\rho_{4,1,3,\lambda}(x^2 y^{-2})^3 &= ( e^{4 \pi i /3} I)^3 = I .
\end{align*}
Further, 
\begin{equation*}
\rho_{4,1,3,\lambda}(zw^{\inv}z) = XY^{\inv}X
\end{equation*}
so that
\begin{equation*}
\rho_{4,1,3,\lambda}(zw^{\inv}z)^2  = I 
\end{equation*} and
\begin{equation*} 
\rho_{4,1,3,\lambda}(p_{12} w^5 zw^{\inv}z)^3  =  (\rho(wz w^{\inv} z))^3 = (e^{4 \pi i /3}YXY^{\inv}X)^3 = (XY^{\inv})^3 = I.
\end{equation*}
Lastly, $\rho_{4,1,3,\lambda}(w)^{25} = \rho_{4,1,3,\lambda}(w)$, and so we see that $\Gamma(12) \subseteq \ker \rho_{4,1,3,\lambda}$. If $\lambda = e^{4 \pi i /6}$ or $e^{10 \pi i /6}$ then $N=12$ and $X^4 = e^{2 \pi i /3 }I$. Here $c=4$ and $d=9$ so that 
\begin{align*}
\rho_{4,1,3,\lambda}(x)  &=  X^4 = e^{2\pi i /3}I  \quad &\rho_{4,1,3,\lambda}(z)  = X^9 = e^{4 \pi i /3} X \\
\rho_{4,1,3,\lambda}(y) &= Y^4 = e^{4 \pi i 3} I \quad &\rho_{4,1,3,\lambda}(w)  = Y^9 = e^{2 \pi i 3} Y
\end{align*}
and 
\begin{equation*}
\rho_{4,1,3,\lambda}(p_{12}) = \rho_{4,1,3,\lambda}(z w z^{-4} w^{-1}) = I.
\end{equation*}

We see $T^{12} \in \ker \rho$. Since $\rho_{4,1,3,\lambda}(x)$ is a scalar matrix, $[x,w]$ is also in the kernel of $\rho_{4,1,3,\lambda}$. We have
\begin{equation*}
\rho_{4,1,3,\lambda}(xy^{-1}x) = (e^{2 \pi i /3})^3 I = I
\end{equation*}
so that $(xy^{\inv}x)^4 \in \ker \rho_{4,1,3,\lambda}$.  Also
\begin{align*}
\rho_{4,1,3,\lambda}(xy^{\inv}x)^2 &=  I \\
\rho_{4,1,3,\lambda}(x^{\inv}y)^3 &= (e^{4\pi i /3}I)^3 = I \\
\rho_{4,1,3,\lambda}(x^2 y^{-2})^3 &= ( e^{2 \pi i /3} I)^3 = I .
\end{align*}
Further, 
\begin{equation*}
\rho_{4,1,3,\lambda}(zw^{\inv}z) = XY^{\inv}X
\end{equation*}
so that
\begin{equation*}
\rho_{4,1,3,\lambda}(zw^{\inv}z)^2  = I 
\end{equation*} and
\begin{equation*} 
\rho_{4,1,3,\lambda}(p_{12} w^5 zw^{\inv}z)^3  =  (\rho_{4,1,3,\lambda}(w z w^{\inv} z))^3 = (e^{2 \pi i /3}YXY^{\inv}X)^3 = (XY^{\inv})^3 = I.
\end{equation*}
Lastly, $\rho_{4,1,3,\lambda}(w)^{25} = \rho_{4,1,3,\lambda}(w)$, and so we see that $\Gamma(12) \subseteq \ker \rho_{4,1,3,\lambda}$. \\

{\bf{\underline{$r=5$}:}} We can always reduce to the cases $\rho_{5,1,2,\lambda}$ with $\lambda^6 = e^{8 \pi i /5}$. Let us consider the first case. We find that $\lambda^6 = e^{8 \pi i /5}$. If $\lambda_= e^{4 \pi i /15}$ or $e^{28 \pi i /5}$ then $N=15$ so $T^{15} \in \ker \rho_{5,1,2,\lambda}$. Also $\rho_{5,1,2,\lambda}(U^2 T^{-8})^3 = (Y^2 X^2)^3 = I$ so $\Gamma(15) \subseteq \ker \rho_{5,1,2,\lambda}$. If $\lambda_1 = e^{19 \pi i /15}$ (resp. $e^{29 \pi i /15}$) then $N=30$ and $X^5 = e^{\pi i /3} I$ (resp. $e^{5 \pi i /3})$. Then $c=16$ and $d=15$ so that 
\begin{align*}
\rho_{5,1,2,\lambda}(x)  &=  X^{16} = -X  \quad &\rho_{5,1,2,\lambda}(z)  = X^{15} = -I \\
\rho_{5,1,2,\lambda}(y) &= Y^{16} = -Y \quad &\rho_{5,1,2,\lambda}(w)  = Y^{15} = -I
\end{align*}
and 
\begin{equation*}
\rho_{5,1,2,\lambda}(p_{30}) = \rho_{5,1,2,\lambda}(z^{20}  w z^{-4} w^{-1}) = I.
\end{equation*}

We see $T^{30} \in \ker \rho_{5,1,2,\lambda}$. Since $\rho_{5,1,2,\lambda}(w)$ is a scalar matrix, $[x,w]$ is also in the kernel of $\rho_{5,1,2,\lambda}$. We have
\begin{equation*}
\rho_{5,1,2,\lambda}(xy^{-1}x) = -XY^{\inv}X
\end{equation*}
so that $(xy^{\inv}x)^4 \in \ker \rho_{5,1,2,\lambda}$.  Also
\begin{align*}
\rho_{5,1,2,\lambda}(xy^{\inv}x)^2 &=  (-XY^{\inv}X)^2 = I \\
\rho_{5,1,2,\lambda}(x^{\inv}y)^3 &= (X^{\inv}Y)^3 = I \\
\rho_{5,1,2,\lambda}(x^8 y^{-2})^3 &= (X^8Y^{-2})^3 = (X^3Y^3)^3 = I .
\end{align*}
Further, 
\begin{equation*}
\rho_{5,1,2,\lambda}(zw^{\inv}z) = -I
\end{equation*}
so that
\begin{equation*}
\rho_{5,1,2,\lambda}(zw^{\inv}z)^2  = I 
\end{equation*} and
\begin{equation*} 
\rho_{5,1,2,\lambda}(p_{30} w^5 zw^{\inv}z)^3  =  (\rho_{5,1,2,\lambda}(w z w^{\inv}  z))^3  = I.
\end{equation*}
Lastly, $\rho_{5,1,2,\lambda}(w)^{25} = \rho(w)$, and so we see that $\Gamma(30) \subseteq \ker \rho_{5,1,2,\lambda}$. If $\lambda = e^{8 \pi i /5}$ then $N=5$ so $T^5 \in \ker \rho_{5,1,2,\lambda}$. Also $\rho_{5,1,2,\lambda}(U^2 T^{-3})^3 = (Y^2 X^2)^3 = I$ so $\Gamma(5) \ker \rho_{5,1,2,\lambda}$. If $\lambda = e^{3 \pi i /5}$ then $N=10$ and $X^5 = -I$. Then $c=6$ and $d=5$ so that 
\begin{align*}
\rho_{5,1,2,\lambda}(x)  &=  X^{6} = -X  \quad &\rho_{5,1,2,\lambda}(z)  = X^{5} = -I \\
\rho_{5,1,2,\lambda}(y) &= Y^{6} = -Y\quad &\rho(_{5,1,2,\lambda}w)  = Y^{5} = -I
\end{align*}
and 
\begin{equation*}
\rho_{5,1,2,\lambda}(p_{10}) = \rho(z^{20} w z^{-4} w^{-1}) = I.
\end{equation*}

We see $T^{10} \in \ker \rho$. Since $\rho_{5,1,2,\lambda}(w)$ is a scalar matrix, $[x,w]$ is also in the kernel of $\rho_{5,1,2,\lambda}$. We have
\begin{equation*}
\rho_{5,1,2,\lambda}(xy^{-1}x) = -XY^{\inv}X
\end{equation*}
so that $(xy^{\inv}x)^4 \in \ker \rho_{5,1,2,\lambda}$.  Also
\begin{align*}
\rho_{5,1,2,\lambda}(xy^{\inv}x)^2 &=  (-XY^{\inv}X)^2 = I \\
\rho_{5,1,2,\lambda}(x^{\inv}y)^3 &= (X^{\inv}Y)^3 = I \\
\rho_{5,1,2,\lambda}(x^3 y^{-2})^3 &= (X^3Y^{-2})^3 = (X^3Y^3)^3 = I .
\end{align*}
Further, 
\begin{equation*}
\rho_{5,1,2,\lambda}(zw^{\inv}z) = -I
\end{equation*}
so that
\begin{equation*}
\rho_{5,1,2,\lambda}(zw^{\inv}z)^2  = I 
\end{equation*} and
\begin{equation*} 
\rho_{5,1,2,\lambda}(p_{30} w^5 zw^{\inv}z)^3  =  (\rho(w z w^{\inv} z))^3  = I.
\end{equation*}
Lastly, $\rho_{5,1,2,\lambda}(w)^{25} = \rho_{5,1,2,\lambda}(w)$, and so we see that $\Gamma(10) \subseteq \ker \rho_{5,1,2,\lambda}$. The other case is analogous.

\end{proof}

\begin{definition} Let $\rho:G \to \mathrm{GL}(d,\C)$ be a representation of a group $G$. Another representation $\rho^\prime$ is called a {\bf{scaling}} of $\rho$ there is some scalar $\theta \in \C^\ast$ so that for all $g \in G$ we have $\rho^\prime(g) = \theta \rho(g)$. In this case we write $\rho^\prime = \theta \rho$ and we say a representation is {\bf{essentially finite}} if it has a scaling with finite image. Clearly if $\rho$ is irreducible then so is $\theta \rho$ for any $\theta$. 
\end{definition}
Note that choices of scalings of a representation are in one-to-one correspondence with elements of the character group of $G$. We are now able to prove Theorem A as a corollary to the above results.

\begin{customthm}{A}{\label{mainbraids}} Let $d = 2$ or $3$ and suppose $\rho: B_3 \to \mathrm{GL}(d,\C)$ is an irreducible essentially finite representation such that $2 \leq \po(\rho(\sigma_1)) \leq5$. Then there is a scaling $\tilde{\rho}$ of $\rho$ so that $\tilde{\rho}$ factors through $\pi$ and if we let $N = | \tilde{\rho}(\sigma_1)|$, then $\pi(\ker \tilde{\rho})$ is a congruence subgroup of level $N$.
\end{customthm}

\begin{proof} First, let $\rho^\prime$ be a scaling of $\rho$ with finite image. Then under $\rho^{\prime}$, we know $(\sigma_1\sigma_2)^3$ acts by some root of unity $\lambda$.  Let $\theta$ be a root of the polynomial $x^6 - \overline{\lambda}$. Define $\tilde{\rho} = \theta\rho$ so that $(\sigma_1\sigma_2)^3$ is in the kernel of $\tilde{\rho}$. There is then a representation $\overline{\rho}$ of $\Gamma$ with finite image so that $\tilde{\rho} = \overline{\rho} \circ \pi $. Since $\po(\rho(\sigma_1)) = \po(\tilde{\rho}(\sigma_1)) = \po(\overline{\rho}(T))$ and $\pi(\ker\rho) = \ker (\rho \circ \pi)$, the result now follows from Proposition  \ref{maingamma2} and Propsition \ref{maingamma3}. 

\end{proof}

\section{Non-congruence kernels} We have shown that for three-dimensional representations, given the projective order of the image of the braid group generator $\sigma_1$ is between two and five, we can ensure that the induced kernel is a congruence subgroup . In this section, we establish that in general this cannot be determined from the projective order alone. Let us begin with a lemma.

\begin{lemma}{\label{NClemma}}Let $\ell > 1$ be an odd integer and let
\begin{equation}{\label{NC}}
X_{\pm} = \begin{pmatrix} e^{\nicefrac{2 \pi i} {3 \ell}} &  \mp e^{\nicefrac{2 \pi i}{3 \ell}} - e^{\nicefrac{-4 \pi i}{3\ell}} & - e^{\nicefrac{2 \pi i}{3 \ell}} \\ 0 & - e^{\nicefrac{2 \pi i}{3\ell}} & - e^{\nicefrac{2 \pi i}{ 3 \ell}} \\ 0 & 0 & e^{\nicefrac{-4 \pi i}{3 \ell} }\end{pmatrix} \quad Y_{\pm} = \begin{pmatrix} \pm e^{\nicefrac{4 \pi i}{3 \ell}} & 0 & 0 \\ \pm e^{\nicefrac{4 \pi i }{3 \ell}} & - e^{\nicefrac{-2 \pi i }{3\ell}} & 0 \\ -e^{\nicefrac{-2 \pi i}{3\ell}} & \pm e^{\nicefrac{-8 \pi i}{3 \ell}} + e^{\nicefrac{-2 \pi i }{3 \ell}} & e^{\nicefrac{-2 \pi i}{3\ell}} 
\end{pmatrix}.  
\end{equation}
Then $[X_{\pm}^{3\ell+1}, Y_{\pm}^{3\ell}]$ has $e^{-2 \pi i /\ell}$ as an eigenvalue. In particular, $[X_{\pm}^{3\ell+1},Y_{\pm}^{3\ell}]$ is not the identity matrix. 
\end{lemma}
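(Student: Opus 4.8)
The plan is to compute the eigenvalues of $C := [X_\pm^{3\ell+1},Y_\pm^{3\ell}] = ABA^{-1}B^{-1}$, where $A := X_\pm^{3\ell+1}$ and $B := Y_\pm^{3\ell}$, and to show its characteristic polynomial is $(t-1)(t-\zeta)(t-\zeta^{-1})$ with $\zeta := e^{-2\pi i/\ell}$. Throughout write $\omega := e^{2\pi i/(3\ell)}$, so that $\zeta = \omega^{-3}$ and $\omega^{3\ell}=1$. The first step is to put $A$ and $B$ in closed form. Since $X_\pm$ is upper triangular with eigenvalues $\omega,-\omega,\omega^{-2}$ and $Y_\pm$ is lower triangular with eigenvalues $\pm\omega^2,-\omega^{-1},\omega^{-1}$, I would apply the divided-difference formula for powers of a triangular matrix. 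The arithmetic collapses because $3\ell$ is odd: the diagonal of $A$ becomes $(\omega,\omega,\omega^{-2})$ and that of $B$ becomes $(\pm 1,-1,1)$, while several off-diagonal entries vanish because the relevant eigenvalue pairs coincide after raising to these powers (e.g. $\omega^{3\ell+1}=(-\omega)^{3\ell+1}=\omega$ forces the $(1,2)$ entry of $A$ to be $0$). (As an aside, $\po(X_\pm)=\po(Y_\pm)=2\ell$, so $X_\pm^{2\ell}$ and $Y_\pm^{2\ell}$ are scalar and the exponents may be lowered to $\ell+1,\ell$ in $C$, but this is not essential.)

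Two structural facts then do most of the work. First, $\det C = 1$ automatically, as $C$ is a commutator. Second — and this is the crux — the eigenvalues $\omega,-\omega,\omega^{-2}$ of $X_\pm$ are distinct for every odd $\ell>1$, so $A$ is diagonalizable, and since $\omega^{3\ell+1}=(-\omega)^{3\ell+1}=\omega$, the scalar $\omega$ is an eigenvalue of $A$ of multiplicity two, with a two-dimensional eigenspace $E$. Likewise $B$ has $\varepsilon := \pm 1$ as an eigenvalue of multiplicity two, with two-dimensional eigenspace $F$. Since $E,F\subseteq\mathbb{C}^3$, their intersection is nonzero, so there is a common eigenvector $u$ with $Au=\omega u$ and $Bu=\varepsilon u$. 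Using $\varepsilon^2=1$ one checks $Cu = ABA^{-1}B^{-1}u = A(\omega^{-1}u)=u$, so $1$ is an eigenvalue of $C$. Combined with $\det C=1$, the remaining two eigenvalues form a reciprocal pair $\nu,\nu^{-1}$.

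It therefore remains only to compute $\tr C$, since $\tr C = 1+\nu+\nu^{-1}$ determines $\nu+\nu^{-1}$, and showing $\nu+\nu^{-1}=\zeta+\zeta^{-1}$ forces $\{\nu,\nu^{-1}\}=\{\zeta,\zeta^{-1}\}$ and hence that $\zeta$ is an eigenvalue. In the $+$ case this is cleanest: the closed forms show that both $A$ and $B$ preserve the coordinate plane $\langle e_2,e_3\rangle$, on which they act by the explicit $2\times 2$ matrices $\left(\begin{smallmatrix}\omega & \alpha\\ 0 & \omega^{-2}\end{smallmatrix}\right)$ and $\left(\begin{smallmatrix}-1 & 0\\ \delta & 1\end{smallmatrix}\right)$ with $\alpha = \omega(\omega-\omega^{-2})/(\omega+\omega^{-2})$ and $\delta = 1+\omega^{-3}$; a short computation of the trace of their commutator (noting the lower matrix is an involution) gives $\omega^3+\omega^{-3}=\zeta+\zeta^{-1}$, exactly as needed. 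In the $-$ case the common eigenvector $u$ is no longer a coordinate vector, so I would either restrict $C$ to the $C$-invariant hyperplane complementary to $u$ in a suitable basis, or simply evaluate $\tr C$ directly from the sparse closed forms of $A$ and $B$; either way the same value $1+\zeta+\zeta^{-1}$ results, giving $\det$ and $\tr$ and hence the characteristic polynomial $(t-1)(t-\zeta)(t-\zeta^{-1})$.

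The only genuine obstacle is bookkeeping: carrying out the triangular power computations and the trace evaluation while repeatedly reducing powers of $\omega$ modulo $\omega^{3\ell}=1$, and doing so uniformly for both signs. The structural reductions above are designed to minimize exactly this, replacing a full $3\times 3$ eigenvalue computation by one $2\times 2$ trace (or a single scalar trace). Two small points must be recorded along the way: that $\omega,-\omega,\omega^{-2}$ (and likewise the eigenvalues of $Y_\pm$) are genuinely distinct for every odd $\ell>1$, which underwrites the diagonalizability and the two-dimensional eigenspaces used above; and that $\zeta\neq 1$ for $\ell>1$, which is what makes the eigenvalue $\zeta$ nontrivial and yields the final conclusion $C\neq I$.
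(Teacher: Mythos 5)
Your strategy is correct in outline and genuinely different from the paper's. The paper rearranges the commutator as $X_{\pm}^{3\ell+1}Y_{\pm}^{3\ell-1}\bigl(Y_{\pm}X_{\pm}^{-(3\ell+1)}Y_{\pm}^{-1}\bigr)Y_{\pm}^{-(3\ell-1)}$ and exhibits $(0,0,1)$ as a \emph{common eigenvector of all four factors} (in effect a left/row eigenvector: upper triangularity of $X_\pm$ and the displayed shapes of $Y_\pm^2$ and $Y_\pm X_\pm^2Y_\pm^{-1}$ are row statements), so the eigenvalue $e^{-2\pi i/\ell}$ is read off directly as a product of four scalars with the $Y$-contributions cancelling; no trace is ever computed. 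You instead get the eigenvalue $1$ from a common right eigenvector of $A=X^{3\ell+1}$ and $B=Y^{3\ell}$ via the intersection of two $2$-dimensional eigenspaces (valid: the spectra of $X_\pm$ and $Y_\pm$ are distinct for odd $\ell>1$, so $A,B$ are diagonalizable), then use $\det C=1$ plus one trace. Your route yields more — the full spectrum $\{1,\zeta,\zeta^{-1}\}$ — at the cost of an actual computation, and in the $+$ case every number you quote checks out: $a_{12}=a_{13}=0$ so $A$ preserves $\langle e_2,e_3\rangle$, $\alpha=\omega(\omega-\omega^{-2})/(\omega+\omega^{-2})$, $\delta=1+\omega^{-3}$, $B|_P^2=I$, and $\tr[A|_P,B|_P]=\omega^3+\omega^{-3}$.

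The genuine gap is the $-$ case, where you assert rather than compute that $\tr C=1+\zeta+\zeta^{-1}$ — and for the matrices \emph{exactly as printed} this is false. Writing $q=\omega^3$, the closed forms give $A=\left(\begin{smallmatrix}\omega&0&-\omega-\alpha\\ 0&\omega&\alpha\\ 0&0&\omega^{-2}\end{smallmatrix}\right)$ and $B=\left(\begin{smallmatrix}-1&0&0\\ 0&-1&0\\ -1&1-q^{-1}&1\end{smallmatrix}\right)$, and a direct evaluation yields
\begin{equation*}
\tr C \;=\; 1+q+q^{-1}-\frac{4(q^2-q+1)^2}{q(q+1)^2},
\end{equation*}
which at $\ell=3$ equals $-16$; so $\spec C=\{1,\nu,\nu^{-1}\}$ with $\nu$ real and $e^{-2\pi i/\ell}$ is \emph{not} an eigenvalue. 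The culprit is not your method but sign typos in the printed $X_-$: to match the Tuba--Wenzl matrices with eigenvalues $(\omega,-\omega,\pm\omega^{-2})$ used in the proof of Theorem B (where the third eigenvalue is explicitly $\pm e^{-4\pi i/3\ell}$), the $(3,3)$ entry of $X_\pm$ must carry the sign $\pm$ and the $(1,2)$ entry must be $\mp\omega^{-2}-\omega$. With the corrected $X_-$ one finds $a_{12}=a_{13}=0$ again, $A|_P=\left(\begin{smallmatrix}\omega&\omega\\ 0&\omega^{-2}\end{smallmatrix}\right)$, $B|_P=\left(\begin{smallmatrix}-1&0\\ 1-q^{-1}&1\end{smallmatrix}\right)$, and the same one-line trace computation gives $q+q^{-1}$ — so the $-$ case is exactly as clean as the $+$ case, and your contingency plan of ``directly evaluating $\tr C$ from the printed closed forms'' would have produced a contradiction rather than the claimed value. (The paper's own proof has the same sensitivity: its displayed third row $(0,0,e^{4\pi i/3\ell})$ of $Y_-X_-^2Y_-^{-1}$ is correct only for the corrected $X_-$.) To close your proof, record the corrected matrices and run your plus-case argument uniformly for both signs.
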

\begin{proof} Let $M_{\pm} = [X_{\pm}^{3\ell+1},Y_{\pm}^{3\ell}]$. First note that 
\begin{align*}
M_{\pm} &= X_{\pm}^{3\ell+1}Y_{\pm}^{3\ell} X_{\pm}^{-(3\ell+1)} Y_{\pm}^{-3\ell} \\
    &= X_{\pm}^{3\ell+1} Y_{\pm}^{3\ell-1} (Y_{\pm}X_{\pm}^{-(3\ell+1)}Y_{\pm}^{-1})Y_{\pm}^{-(3\ell-1)}.
\end{align*}
We will show that each of $X_{\pm}^{3\ell+1}, \, Y_{\pm}^{3\ell-1}, \, Y_{\pm}X_{\pm}^{-(3\ell+1)}Y_{\pm}^{-1}, \, \text{and, } Y_{\pm}^{-(3\ell-1)}$ have $v=(0,0,1)$ as a right eigenvector so that their product $M_{\pm}$ does as well. 

Now since $X_{\pm}$ is upper triangular, we know $v$ is already a right eigenvector of $X_{\pm}^{3\ell+1}$ with eigenvalue $e^{-4(3\ell+1)\pi i /3\ell} = e^{-4\pi i /3 \ell}$. We can compute that 
\begin{equation*}
Y_{\pm}^2 =\begin{pmatrix} e^{\nicefrac{8\pi i}{\ell}} & 0 & 0 \\ e^{\nicefrac{8 \pi i }{\ell}} \mp e^{\nicefrac{2\pi i}{3\ell}} & e^{\nicefrac{-4 \pi i}{\ell}} & 0 \\ 0 & 0 & e^{\nicefrac{-4 \pi i}{\ell}} \end{pmatrix}
\end{equation*}
and since $3 \ell-1$ is even, we see that both $Y_{\pm}^{3 \ell-1}$ and its inverse both have $v$ as a right eigenvector. Another computation shows
\begin{equation*}
Y_{\pm}X_{\pm}^2Y_{\pm}^{-1} = \begin{pmatrix} e^{\nicefrac{4\pi i}{3\ell}} & 0 & 0  \\  e^{\nicefrac{4 \pi i}{3\ell}}  \mp e^{\nicefrac{-2\pi i }{3\ell}} & e^{\nicefrac{-8 \pi i}{3\ell}} & \mp e^{\nicefrac{4 \pi i}{3 \ell}} \pm e^{\nicefrac{-2 \pi i }{3\ell}}   \\ 0 & 0 & e^{\nicefrac{4 \pi i}{3 \ell}} \end{pmatrix}
\end{equation*}
Again $3\ell+1$ is even so $Y_{\pm}X_{\pm}^{-(3\ell+1)} Y_{\pm}^{\inv} = [(Y_{\pm}X_{\pm}^2Y_{\pm}^{-1})^{-1}]^{\nicefrac{3\ell+1}{2}}$ has $v$ as a right eigenvector with eigenvalue $(e^{4\pi i /3\ell})^{-(3\ell+1)/2} = e^{-2 \pi i /3\ell}$. Thus $M_{\pm}$ too has $v$ as a right eigenvector and the corresponding eigenvalue is $e^{-4\pi i /3 \ell} e^{-2 \pi i /3 \ell} = e^{-2 \pi i /\ell}$.

\end{proof}

\begin{customthm}{B} For each odd positive integer $\ell$ greater than one, there are two irreducible representation $\rho_{\ell,\pm}$ with finite image so that $\rho_{\ell,\pm}(\sigma_1)$ has projective order $2 \ell$ and $\pi(\ker\rho_{\ell,\pm})$ is a non-congruence subgroup of $\Gamma$ with geometric level $6\ell$. 
\end{customthm}

\begin{proof} Fix $\ell$ satisfying the hypotheses above and let $\rho_{\ell,\pm}: B_3 \to \mathrm{GL}(3,\C)$ be given by $\rho_{\ell,\pm}(\sigma_1)=X_\pm$ and $\rho_{\ell,\pm}(\sigma_2) = Y_{\pm}^{-1}$ with $X_{\pm}$ and $Y_{\pm}$ as above. Since $(e^{2 \pi i / 3 \ell} \cdot -e^{2 \pi i /3 \ell} \cdot \pm e^{-4 \pi i / 3\ell})^2 = 1$, we see by Theorem \ref{TW} that $\rho_{\ell,\pm}$ factor through $\pi$. Let $\overline{\rho}_{\ell,\pm}$ be the induced representations. Rowell and Tuba's criteria in dimension three implies $\rho_{\ell,\pm}$ (hence $\overline{\rho}_{\ell,\pm}$) has finite image since $\spec(\rho_{\ell,\pm}(\sigma_1)) = \set{\pm e^{2 \pi i/3\ell}, \pm e^{-4 \pi i /3\ell}}$. Its easy to compute $\po(X_\pm) = 2\ell$ and that $X_\pm$ has order $6 \ell$. Therefore, if we let $K_{\ell,\pm}$ equal $\pi (\ker \rho_{\ell,\pm})$ then each $K_{\ell,\pm}$ is a finite index subgroups of $\Gamma$ with geometric level $6\ell$. We are done if $\Gamma(6 \ell)$ is not a subgroup of $K_{\ell,\pm}$. 

Taking $G_{6 \ell}$ as in Proposition \ref{Hsu}, we see $K_{\ell,\pm}$ is non-congruence if and only if for some element of $\gamma \in G_{6 \ell}$ we have $\overline{\rho}_{\pm}(\gamma)$ is not the identity matrix. Observe that since $\ell$ is odd, the values of $c$ and $d$ in Proposition \ref{Hsu} can be determined to be $c=3\ell+1$ and $d=3\ell$. Therefore $[T^{3\ell+1}, U^{3\ell}] \in G_{6\ell}$. But then 
\begin{align*}
\overline{\rho}_{\ell,\pm}\left( [T^{3\ell+1}, U^{3\ell}]\right) &= \overline{\rho}_{\ell,\pm}\left( [\pi(\sigma_1)^{3\ell+1} , \pi(\sigma_2^{-1})^{3\ell}]\right)\\
									&= \rho_{\ell,\pm} \left( [\sigma_1^{3\ell+1}, \sigma_2^{-3\ell}] \right) \\
									&= [X_{\pm}^{3\ell+1},Y_{\pm}^{3\ell}]\\
									&= M_{\pm}
\end{align*}
so by Lemma \ref{NClemma} $[T^{3\ell+1}, U^{3\ell}]$ is not in the kernel of $\rho_{\ell,\pm}$. Thus, we have shown each $K_{\ell,\pm}$ is non-congruence. 
\end{proof}

\section{Realization by modular tensor categories}

Our results in section 3 apply to arbitrary two-dimensional representations. Let us now turn our attentions towards those arising from modular tensor categories. We will show that there are quantum representations of $B_3$ satisfying Corollary \ref{mainbraids} for $r=2,3,4$. For each representations $\rho$, it will be enough to know the eigenvalues of $\rho(\sigma_1)$. 

For $r=2,3,$ and $4$ there are two-dimensional irreducible unitary representations $\rho$  with $\po(\rho(\sigma_1)) = r$ arising from weakly integral modular tensor categories. The modular tensor category structure of $\mathcal{C}=\Rep D(S_3)$ was described completely in \cite{Cui}. The authors label the simple objects of this category with the letters $A$ through $G$. It was shown that the anyon $C$ corresponding to the standard representation of the $S_3$ gives rise to a unitary representation $\tilde{\rho}_C: B_3 \to \mathrm{U}(\Hom(C, C^{\otimes 3}))$. This representation has a two-dimensional irreducible summand $\rho_C$ so that with respect to some basis
\begin{equation*}
\rho_C(\sigma_1) =  \begin{pmatrix} i & 0 \\ 0 & -i \end{pmatrix}.
\end{equation*}
One can see that $\po(\rho_C(\sigma_1)) = 2$.  Also arising from $\Rep D(S_3)$ is a unitary representation $\tilde{\rho}_D: B_4 \to \mathrm{U}(\Hom(B, D^{\otimes 4}))$. However, the vector space $\Hom(B,D^{\otimes 4})$ is isomorphic to $\Hom(B \otimes D, D^{\otimes 3})$ which gives a representation of $B_3$ with a two-dimensional irreducible summand $\rho_D$. In this case, we can choose a basis so that
\begin{equation*}
\rho_D(\sigma_1) = \begin{pmatrix} \omega  &  0 \\  0 & \omega^2 \end{pmatrix}
\end{equation*}
where $\omega = e^{2 \pi i/3}$ and  so $\po(\rho_D(\sigma_1)) = 3$. The Ising category is a rank 3 modular tensor category with simple objects usually denoted $\Set{ 1 , \sigma, \psi}$ in the literature and is closely related to the Chern-Simmons-Witten $\mathrm{SU}(2)$-TQFT at level 2 or equivalently the purification of $\Rep U_q\mathfrak{sl}_2 \C $ where $q$ is an appropriate choice of 16th root of unity. It arises as a Temperley-Lieb-Jones algebroid with Kauffman variable $A = i e^{-\pi i /16}$ (see \cite{Wang}). The simple object $\sigma$ 
affords a two-dimensional irreducible representation $\rho_\sigma: B_3 \to \mathrm{U}(\Hom(\sigma, \sigma^{\otimes 3}))$ where in the canonical basis of admissibly labelled trivalent trees
\begin{equation*}
\rho_\sigma(\sigma_1)= \begin{pmatrix} e^{-\pi i /8} & 0 \\ 0 & e^{3\pi i/8} \end{pmatrix}
\end{equation*}
thus $\po(\rho_\sigma(\sigma_1)) = 4$. Although not defined using the braiding on the category, it is worth pointing out that the quantum representation of $\mathrm{SL}(2,\ints)$ associated to the Fibonacci category maps $T$ to a matrix with projective order 5 and this pulls back to a representation $\rho$ of $B_3$ so that $\po(\rho(\sigma_1)) = 5$.

There is a three-dimensional irreducible representation of $B_3$ arising from $\Rep D(S_3)$ with finite image, none of whose scalings project onto a congruence subgroup of $\Gamma$. The anyon $G$ in this category leads us to an irreducible representation $\rho_G: B_3 \to \mathrm{U}(\Hom(G, G^{\otimes 3}))$ with finite image. In the basis of the admissibly labeled trivalent trees, this representation is given by 
\begin{equation*}
\rho_G(\sigma_1) = \omega^2 \begin{pmatrix} 1 & 0 & 0 \\ 0 & -1 & 0 \\ 0 & 0 & \omega^2 \end{pmatrix} \qquad \rho_G(\sigma_2) = \omega \begin{pmatrix} \frac{1}{2} & -\frac{1}{2} & \frac{1}{\sqrt{2}} \omega \\ - \frac{1}{2} & \frac{1}{2} & \frac{1}{\sqrt{2}}\omega \\ \frac{1}{\sqrt{2}} \omega & \frac{1}{\sqrt{2}} \omega & 0 \end{pmatrix} 
\end{equation*}
where $\omega = e^{2\pi i /3}$. The generator of the center acts by $\omega$. If we let $\tau=e^{-\pi i / 9 }$ and $\rho= \tau \rho_G$ then $\spec(\rho(\sigma_1)) = \set{\pm e^{2 \pi i/9}, e^{5 \pi i/9} }$ and so $\rho$ is conjugate to $\rho_{3,-}$. In particular, the induced kernel $\pi(\ker \rho)$ is a non-congruence subgroup of $\Gamma$.

\bibliography{biblio} 
\bibliographystyle{bib}

\end{document}